\newtheorem{theorem}{Theorem}[section]
\newtheorem{lemma}[theorem]{Lemma}
\theoremstyle{definition}
\newtheorem{Prop}[theorem]{Proposition}
\newtheorem{example}[theorem]{Example}
\newtheorem{conjecture}[theorem]{Conjecture}
\theoremstyle{remark}
\numberwithin{equation}{section}
\DeclareMathOperator{\lcm}{lcm}
\begin{document}

\title{Strong Pseudoprimes to Twelve Prime Bases}


\author{Jonathan Sorenson}
\address{Department of Computer Science and Software Engineering, Butler University}
\curraddr{}
\email{sorenson@butler.edu}
\thanks{}

\author{Jonathan Webster}
\address{Department of Mathematics and Actuarial Science, Butler University}
\curraddr{}
\email{jewebste@butler.edu}
\thanks{This work was supported in part by a grant from the Holcomb Awards Committee.
   We thank Frank Levinson for his generous support of Butler University's
    cluster supercomputer, \textit{Big Dawg}.
We would also like to thank Andrew Shallue for his feedback on a draft of this paper,
  and Paul Pollack for his help with some details regarding an earlier proof
  approach to the running time bound for $\lambda$-sieving.}

\subjclass[2010]{Primary 11Y16, 11Y16; Secondary 11A41, 68W40, 68W10}
\date{\today}

\dedicatory{}

\begin{abstract} 
Let $\psi_m$ be the smallest strong pseudoprime to the first $m$ prime bases. 
This value is known for $1 \leq  m \leq 11$. 
We extend this by finding $\psi_{12}$ and $\psi_{13}$. 
We also present an algorithm to find all integers $n\le B$
  that are strong pseudoprimes to the first $m$ prime bases;
  with a reasonable heuristic assumption we can show
  that it takes at most $B^{2/3+o(1)}$ time.
\end{abstract}

\maketitle

\section{Introduction}

Fermat's Little Theorem states that if $n$ is prime and $\gcd (a,n) = 1$,
   then \[ a^{n-1} \equiv 1 \pmod{n}.\]
A composite number for which this congruence holds is called   
  a \textit{pseudoprime} to the base $a$.  
We can restate Fermat's Little Theorem by algebraically factoring   
  (repeatedly) the difference of squares that arises in $a^{n-1} - 1$. 
In which case, if $n$ is an odd prime, writing $n - 1 = 2^sd$ with $d$ odd,  
  and $\gcd (a,n) = 1$, then either
\[ a^d \equiv 1 \pmod{n}\]
or
\[ a^{2^kd} \equiv -1 \pmod{n} \]
for some   integer $k$ with $0 \leq k < s$. If a composite $n$ satisfies the above,   we call $n$ a \textit{strong pseudoprime} to the base $a$.  Unlike Carmichael numbers, which are pseudoprimes to all bases,   there do not exist composite numbers that are strong pseudoprimes to all bases. This fact forms the basis of the Miller-Rabin probabilistic   primality test \cite{rabin}. 
Define $\psi_m$ to be the smallest integer that is a strong pseudoprime to the first $m$ prime bases.
(See A014233 at \texttt{oeis.org}.)

The problem of finding strong pseudoprimes has a long history.  Pomerance, Selfridge, and Wagstaff \cite{pom_self} computed   $\psi_m$ for $m = 2, 3,$ and $4$. Jaeschke \cite{jaeschke} computed   $\psi_m$ for $m = 5, 6, 7,$ and $8$. By looking at a narrow class of numbers,  Zhang \cite{zhang} gave upper bounds on $\psi_m$   for $9 \leq m \leq 19$.  Recently, Jian and Deng \cite{8bases} verified some of Zhang's   conjectures and computed $\psi_m$ for $m = 9, 10,$ and $11$.  We continue in this effort with the following.

\begin{theorem}
\begin{align*}
\psi_{12} &=  3186\ 65857\ 83403\ 11511\ 67461.\\
\psi_{13} &=  33170\ 44064\ 67988\ 73859\ 61981.
\end{align*}
\end{theorem}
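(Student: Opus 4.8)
The plan is to split the assertion into two independent parts for each value: first, that each displayed integer genuinely is a strong pseudoprime to the first twelve (resp.\ thirteen) prime bases and is composite; and second, the harder claim that no smaller composite has this property. The first part is a finite deterministic check carrying no heuristic: exhibit a nontrivial factorization of each value (certifying compositeness), and for each base $a \in \{2,3,5,\dots\}$ run the strong pseudoprime test, writing $n-1 = 2^s d$ with $d$ odd and verifying that $a^d \equiv 1 \pmod{n}$ or $a^{2^k d} \equiv -1 \pmod{n}$ for some $0 \le k < s$. This costs only $O(m \log n)$ modular squarings per value.

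The minimality claim is where the work lies, and the idea is to render an exhaustive search feasible by exploiting the rigid arithmetic structure of strong pseudoprimes to many bases. Writing $n = \prod_i p_i^{e_i}$ and applying the test modulo each prime power, one sees that $n$ is a strong pseudoprime to base $a$ exactly when either every $\mathrm{ord}_{p_i^{e_i}}(a)$ is odd (and divides $d$), or there is a common level $k$ with $\nu_2\big(\mathrm{ord}_{p_i^{e_i}}(a)\big) = k+1$ for all $i$ and each odd part dividing $d$. Imposing this simultaneously for $a = 2,3,\dots$ forces the valuations $\nu_2\big(\mathrm{ord}_{p_i^{e_i}}(a)\big)$ to agree across the prime factors for every base, which pins each $p_i$ to a sparse set of residue classes and forces $n$ to be squarefree with very few prime factors (repeated factors and large factor counts being ruled out below $B$ by the same valuation constraints). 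This reduces the search below $B$ to enumerating products of a bounded number of primes subject to these congruence and valuation conditions.

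Concretely, I would run the enumeration algorithm of the paper with $B = \psi_{13}$ (which dominates $\psi_{12}$), collecting every $n \le B$ that is a strong pseudoprime to the first $13$ prime bases together with the subset that are strong pseudoprimes to the first $12$; the minima of these two lists should be exactly the displayed values. The engine is a sieve over the Carmichael-type parameter $\lambda$ (the ``$\lambda$-sieving'' of the abstract): one fixes the structural data---the level $k$ and the $2$-adic profile of each factor---and uses it to sieve candidate prime factors out of arithmetic progressions, then assembles them into admissible products. Certifying that the smallest survivor equals $\psi_{12}$ (resp.\ $\psi_{13}$) is then bookkeeping over finitely many structural cases.

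The main obstacle is twofold. First, \emph{completeness}: one must prove that the structural classification above captures every strong pseudoprime to the given bases with no gaps, so that no candidate below $B$ is silently skipped; this must hold unconditionally, since the $\psi_m$ are asserted as theorems. Second, \emph{feasibility}: even after structural pruning the search space is enormous for $B \sim 3\times 10^{25}$, and the $B^{2/3+o(1)}$ running-time bound is only heuristic, resting on an assumption about the density of primes in the relevant progressions. I expect the delicate points to be organizing the case analysis on $k$ and on the number of prime factors so that every case is provably exhausted, and managing the sieve so that the computation terminates within the cluster's resources while the correctness of the output remains independent of the unproved running-time heuristic.
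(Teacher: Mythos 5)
Your overall strategy is the one the paper actually uses: a finite deterministic verification that the two displayed numbers are composite strong pseudoprimes to the relevant bases, plus an exhaustive enumeration below $B$ of all candidates built from primes with matching signatures (Jaeschke's valuation criterion), driven by GCD computations for small leading factors and $\lambda$-sieving combined with signature/quadratic-character sieving for large ones. You are also right on the key logical point that the $B^{2/3+o(1)}$ heuristic affects only the running time, not the correctness of the output, so the theorem is unconditional once the computation terminates.

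There is, however, one genuine gap in your reduction: you assert that repeated prime factors are ``ruled out below $B$ by the same valuation constraints.'' They are not. If $p^2\mid n$ and $n$ is a Fermat pseudoprime to base $a$, then $\mathrm{ord}_{p^2}(a)\mid n-1$ and $p\nmid n-1$, which forces $a^{p-1}\equiv 1\pmod{p^2}$, i.e.\ $p$ must be a Wieferich prime to base $a$. This is a genuine possibility that no signature or $2$-adic valuation argument excludes; eliminating it requires a separate search for Wieferich primes up to $B^{1/2}$ (for each base, or for enough bases). The paper discharges this by citing the Dorais--Klyve Wieferich computation, which shows that any pseudoprime to bases $2$ and $3$ below $4.4\times 10^{31}$ is squarefree --- a bound that covers both displayed values. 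Without this (or an equivalent Wieferich search of your own), your enumeration of squarefree products of signature-matched primes is not provably exhaustive, and the minimality claim fails to be a theorem. A second, smaller soft spot: the bound on the number of prime factors $t$ is likewise not an a priori consequence of the valuation constraints alone; one must actually run the enumeration for increasing $t$ until the candidate set of signature-matched products is empty (the paper stops at $t=6$), which is fine but should be stated as part of the completeness argument rather than assumed.
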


We also verified Jian and Deng's work in finding   $\psi_{9}= \psi_{10}= \psi_{11}$.
Note that the ERH implies $\psi_m\ge \exp(\sqrt{m/2})$, if $\psi_m$ exists
  \cite{Bach1990}.

The proof of our theorem relies primarily on running an 
  improved algorithm for finding strong pseudoprimes:

\begin{theorem} \label{thm:runtime}
Given a bound $B>0$ and an integer $m>0$ that grows with $B$,
  there is an algorithm to find all integers $\le B$ that  
  are products of exactly two primes and
  are strong pseudoprimes to the first $m$ prime bases 
  using at most $B^{2/3+o(1)}$ arithmetic operations. 
\end{theorem}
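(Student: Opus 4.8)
The plan is to first reduce the strong pseudoprime condition for a product of two primes to a pair of explicit congruences, and then to build an algorithm around that reduction that treats the smaller prime factor in two size regimes. Fix the bases $a_1<\dots<a_m$ (the first $m$ primes), let $n=pq$ with $p<q$ odd primes not dividing any $a_i$, write $n-1=2^s d$ with $d$ odd, and for a prime $r$ factor $\mathrm{ord}_r(a)=2^{\ell_r(a)}o_r(a)$ with $o_r(a)$ odd. Working prime by prime via the isomorphism $(\mathbb{Z}/n)^\times\cong(\mathbb{Z}/p)^\times\times(\mathbb{Z}/q)^\times$, I would first show that $n$ is a strong pseudoprime to base $a$ if and only if $\ell_p(a)=\ell_q(a)$ and $o_p(a)\mid d$ and $o_q(a)\mid d$. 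The key observation is that $a^d\bmod r$ has order $2^{\ell_r(a)}\cdot\bigl(o_r(a)/\gcd(d,o_r(a))\bigr)$, so modulo $r$ the number passes the strong test exactly when $o_r(a)\mid d$, landing in the ``$\equiv 1$'' branch when $\ell_r(a)=0$ and in the ``$\equiv -1$ at step $\ell_r(a)-1$'' branch otherwise; to pass modulo $n$ the two primes must use the same branch, forcing $\ell_p(a)=\ell_q(a)$. Setting $L_r=\lcm_{i\le m}o_r(a_i)$ and using $p\equiv 1\pmod{L_p}$, the divisibilities $o_p(a_i)\mid d$ for all $i$ collapse, since $pq-1\equiv q-1\pmod{L_p}$, to the single congruence $q\equiv 1\pmod{L_p}$, and symmetrically to $p\equiv 1\pmod{L_q}$. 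Thus $n=pq$ is a strong pseudoprime to the first $m$ bases iff $\vec\ell(p)=\vec\ell(q)$, $q\equiv 1\pmod{L_p}$, and $p\equiv 1\pmod{L_q}$, where $\vec\ell(r)=(\ell_r(a_1),\dots,\ell_r(a_m))$.

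Armed with this, I would enumerate by the smaller prime $p\le\sqrt B$, splitting at $B^{1/3}$. When $B^{1/3}<p\le\sqrt B$ we have $q\le B/p<B^{2/3}$, so for each such $p$ I would sieve the arithmetic progression $q\equiv 1\pmod{L_p}$ up to $B/p$, and for each prime $q$ found verify $\vec\ell(q)=\vec\ell(p)$ and $p\equiv 1\pmod{L_q}$ in $B^{o(1)}$ time by computing the $m$ orders modulo $q$. When $p\le B^{1/3}$ the larger prime can be as big as $B/p$, too large to sieve directly; here I would instead exploit the cross congruence $p\equiv 1\pmod{L_q}$, which forces $L_q\mid(p-1)$, hence $L_q\le B^{1/3}$. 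So the admissible $q$ are exactly the sparse primes whose order-lcm $L_q$ divides the fixed integer $p-1$; these I would generate directly by a $\lambda$-type search that runs over the odd divisors $\ell\mid(p-1)$ and produces the primes $q\le B/p$ with $L_q=\ell$, rather than filtering all primes up to $B/p$.

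Precomputing a table of primes up to $B^{2/3}$ together with their vectors $\vec\ell$ and values $L$ costs $B^{2/3+o(1)}$ and covers all primality tests and order computations needed in the first regime, since there both $p$ and $q$ are below $B^{2/3}$. The cost of the first regime is then, up to $B^{o(1)}$ factors, $\sum_{B^{1/3}<p\le\sqrt B}B/(pL_p)$, and the cost of the second is governed by $\sum_{p\le B^{1/3}}\#\{q\le B/p:L_q\mid (p-1)\}$. For a typical prime $L_r$ is comparable to $r$, which already makes the first sum $B^{2/3+o(1)}$; the danger is the rare primes with an unusually small order-lcm, exactly the primes for which $r-1$ has a large power-of-two part or the bases have small multiplicative order. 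I therefore expect the main obstacle to be a rigorous bound on how often $L_r$ is small, and this is where a heuristic enters: assuming that the number of primes $r\le x$ with $L_r$ dividing a given integer $M$ is at most $M^{1+o(1)}$ (equivalently, that primes with small odd order-lcm are as sparse as heuristics for $\mathrm{ord}_r(2)$ and for smooth values of $r-1$ predict), the second sum is at most $\sum_{p\le B^{1/3}}(p-1)^{1+o(1)}\le B^{2/3+o(1)}$, and the same input controls the exceptional terms of the first sum. Combining the two regimes under this assumption yields the claimed $B^{2/3+o(1)}$ bound.
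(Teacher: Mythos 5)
Your reduction of the two-prime condition to matching signatures plus the congruences $q\equiv 1\pmod{L_p}$ and $p\equiv 1\pmod{L_q}$ is correct and mirrors the paper's use of Jaeschke's proposition together with $\lambda$-sieving, and your split at $p=B^{1/3}$ is exactly the paper's GCD/sieve cutoff $X=B^{1/3}$. The first real gap is in the regime $p\le B^{1/3}$: you never say how to ``produce the primes $q\le B/p$ with $L_q=\ell$,'' and that generation step is the entire content of this regime. The paper's answer is that every admissible $q$ divides the fixed integer $\gcd\left(h(a_1,p),\ldots,h(a_m,p)\right)$, where $h(a_i,p)$ is the relevant algebraic factor of $a_i^{p-1}-1$, an integer of $O(p)$ bits; one computes this gcd in $M(p)\log p=p^{1+o(1)}$ operations and (for $m\ge 11$, in practice always) finds it is $\le p$, ruling $p$ out without ever enumerating candidates. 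Your cost accounting for this regime counts the candidates $q$ but not the work of finding them. Note also that once you impose the signature-matching condition you yourself derived, the number of such $q$ is \emph{unconditionally} $O(p\log a_1)$, since they all divide a fixed $O(p)$-bit integer; no heuristic is needed here, and summing $p^{1+o(1)}$ over $p\le B^{1/3}$ gives the $B^{2/3+o(1)}$ for this half.

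The second gap is more serious: your handling of the sieving regime is both unnecessarily conditional and quantitatively insufficient. The paper proves the $t=2$ case unconditionally; the key input is Pappalardi's theorem, $\sum_{p\le x}1/\lambda_{p,m}\ll x^{1/(m+1)}(\log x)^{-1-\tau/(m+1)}$, which is $x^{o(1)}$ precisely because $m\to\infty$ with $B$ --- this is where the hypothesis on $m$ in the statement is actually used, and it is an intrinsically multi-base estimate. Your substitute assumption, that $\#\{r\le x:L_r\mid M\}\le M^{1+o(1)}$, does not recover it: it yields only $\#\{p\le x:L_p=\ell\}\le\ell^{1+o(1)}$, hence (splitting at $L_p\le T$ versus $L_p>T$ and taking $T=\sqrt{x}$) only $\sum_{p\le x}1/L_p\ll x^{1/2+o(1)}$. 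Feeding that into your first sum gives a sieving cost of order $(B/X)^{3/2+o(1)}$, and balancing against $X^{2+o(1)}$ lands at $X=B^{3/7}$ and a total of $B^{6/7+o(1)}$, not $B^{2/3+o(1)}$. To close this you need $\sum_{p\le x}1/L_p\ll x^{o(1)}$ (or the equivalent for $\lambda_{p,m}$), which is exactly what the multi-base Pappalardi bound supplies rigorously.
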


We have a heuristic argument extending this $B^{2/3+o(1)}$
  running time bound to integers with an arbitrary number of prime divisors.
This result is an improvement over a heuristic $O(B^{9/11})$ from \cite{bleich}.  
Our algorithm uses the work of Jian and Deng as a starting point,  and our specific improvements are these:
\begin{itemize}
  \item We applied the results in Bleichenbacher's thesis \cite{bleich}    to speed the computation of GCDs.  See \S\ref{sec:gcd}.
  \item We applied the space-saving sieve \cite{sorensonagain, sorensonanother}  which enabled us to sieve using full prime signature information.   See \S\ref{sec:sigsieve}.
  \item We used a simple table of linked-lists indexed by a hash function based on prime signatures.   We used this to store small primes by signature for very fast lookup.    See \S\ref{sec:datastructure}.
  \item When it made sense to do so, we parallelized our code and made use of \textit{Big Dawg},  Butler University's cluster supercomputer.
\end{itemize}
Our changes imply that adding more prime bases  for a pseudoprime search   (that is, making $m$ larger) speeds up the algorithm.

The rest of this paper is organized as follows.  Section 2 contains a description of the algorithm and   its theoretical underpinnings.   Section 3 contains the proof of the running time.  Section 4 contains implementation details.

\section{Algorithmic Theory and Overview}

To find all $n\le B$ that are strong pseudoprimes to the first $m$  prime bases, we use some well-known conditions to limit how many  such integers $n$ we need to check. This is outlined in the first subsection below.

We construct all possibilities by generating $n$ in factored form, $n=p_1p_2\ldots p_{t-1} p_t$,  where $t$ is the number of prime divisors of $n$, and $p_i<p_{i+1}$. It turns out that $t$ cannot be too big; we only had to test up to $t=6$. So we wrote separate programs for each possible $t$ and the case $t=2$ dominates the running time.

Let $k=p_1p_2\cdots p_{t-1}$. Our algorithms generate all candidate $k$ values,   and then for each $k$ search for possible primes $p_t$ to match with $k$,   to form a strong pseudoprime candidate $n=kp_t$.

For the bound $B$, we choose a cutoff value $X$. For $k\le X$, we use a GCD computation  to search for possible primes $p_t$, if they exist. For $k>X$, we use sieving to find possible primes $p_t$. Again, we use separate programs for GCDs and for sieving.

Once each $n$ is formed, we perform a strong pseudoprime test for  the first $m$ prime bases to see if we have, in fact, found  what we are looking for.

\subsection{ Conditions on (Strong) Pseudoprimes}

For the purposes of this paper, it suffices to check square-free odd integers. A computation by Dorais and Klyve \cite{FandD} shows that a   pseudoprime to the bases 2 and 3 must be square-free   if it is less than $4.4\cdot 10^{31}$.  Since the values Zhang conjectured as candidates for $\psi_{12}$   and $\psi_{13}$ are less than $4.4 \cdot 10^{31}$, we restrict our search to square-free numbers.  

In general, to rule out integers divisible by squares, it suffices to  find all Wieferich primes $\le B^{1/2}$, and for each such prime,  perform appropriate tests. This can easily be done in well under $O(B^{2/3})$ time. See \S3.6.1 in \cite{bleich}.

To each prime $p$ we associate a vector called its \textit{signature}. Let $\nu = (a_1, a_2, \ldots, a_m)$ for $a_i$ distinct positive integers.  Then the signature of $p$ is 
\[ \sigma_p^{\nu} = \left( v(\mbox{ord}_p(a_1)), v(\mbox{ord}_p(a_2)), \ldots, v( \mbox{ord}_p(a_m)  )  \right),\]
where $v$ denotes valuation with respect to $2$ and $\mbox{ord}_p(a)$ is the multiplicative order of $a$ modulo $p$. 

\begin{example} 
  Let $p$ be the prime $151121$.  Then   $\sigma_p^\nu = (3,4,0,4,2,1,2,4)$  if $\nu=(2,3,5,7,11,13,17,19)$.
\end{example} 

\begin{theorem}[Proposition 1 in \cite{jaeschke}]
Let $n = p_1 \cdots p_t$ with $t$ distinct primes, $\nu = (a_1, a_2, \ldots, a_m)$ with different integers such that $p_i \nmid a_j$ for all $1 \leq i \leq t$ and $1 \leq j \leq m$.  Then $n$ is a strong pseudoprime to each base in $\nu$ if and only if n is a pseudoprime to each base in $\nu$ and $\sigma_{p_1}^{\nu} =  \cdots = \sigma_{p_t}^{\nu}$.
\end{theorem}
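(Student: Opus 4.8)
The plan is to reduce the simultaneous congruences modulo $n$ to congruences modulo each prime $p_i$ via the Chinese Remainder Theorem, treat one base $a = a_j$ at a time, and then extract everything from a single lemma describing the behaviour of $a^{2^k d} \bmod p$ in terms of the $2$-adic valuation of $\mbox{ord}_p(a)$. Write $n - 1 = 2^s d$ with $d$ odd, and for a prime $p \mid n$ with $p \nmid a$ set $e = \mbox{ord}_p(a) = 2^w f$ with $f$ odd, so that $w = v(e)$ is exactly the corresponding entry of the signature. The two facts I would establish first are: (i) $a^d \equiv 1 \pmod p$ if and only if $w = 0$ and $f \mid d$; and (ii) for $k \ge 0$, $a^{2^k d} \equiv -1 \pmod p$ if and only if $f \mid d$ and $w = k+1$. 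Both are short order computations: $a^{2^k d} \equiv 1 \pmod p$ iff $e \mid 2^k d$ iff $w \le k$ and $f \mid d$, and the key point for (ii) is that, since $p$ is odd, $-1$ is the \emph{unique} element of order $2$ in $(\mathbb{Z}/p\mathbb{Z})^\times$, so $a^{2^k d} \equiv -1$ is equivalent to $a^{2^k d}$ having order exactly $2$, i.e.\ $a^{2^{k+1}d} \equiv 1$ but $a^{2^k d} \not\equiv 1$.

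Next I would record the consequence for pseudoprimality: by CRT, $n$ is a pseudoprime to base $a$, i.e.\ $a^{n-1} = a^{2^s d} \equiv 1 \pmod n$, if and only if for every $i$ we have $f_i \mid d$ and $w_i \le s$, where $w_i = v(\mbox{ord}_{p_i}(a))$ and $f_i$ is the odd part of $\mbox{ord}_{p_i}(a)$. Using (i) and (ii) together with CRT, $n$ is a \emph{strong} pseudoprime to base $a$ exactly when either $a^d \equiv 1 \pmod{p_i}$ for all $i$ (forcing every $w_i = 0$) or, for a single common $k$ with $0 \le k < s$, $a^{2^k d} \equiv -1 \pmod{p_i}$ for all $i$ (forcing every $w_i = k+1$). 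In both cases $f_i \mid d$ for all $i$, and in both cases the valuations $w_1, \ldots, w_t$ are forced to share one common value $w \le s$. Thus, relative to being a pseudoprime to base $a$, the extra content of being a strong pseudoprime to base $a$ is precisely the equality $w_1 = \cdots = w_t$.

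To finish, I would run this equivalence across all bases simultaneously. The $j$-th coordinate of $\sigma_{p_i}^\nu$ is $v(\mbox{ord}_{p_i}(a_j))$, so $\sigma_{p_1}^\nu = \cdots = \sigma_{p_t}^\nu$ says exactly that for each base $a_j$ the valuations agree across all primes $p_i$. For the forward direction, a strong pseudoprime to every base is a pseudoprime to every base (strong pseudoprimality implies pseudoprimality), and the previous paragraph forces the valuations to agree, giving equal signatures. For the converse, assuming pseudoprimality supplies $f_i^{(j)} \mid d$ and $w_i^{(j)} \le s$ for all $i,j$, while equality of signatures supplies a common value $w^{(j)}$ for each base; taking the appropriate case ($w^{(j)} = 0$, or $k = w^{(j)} - 1 \in [0,s)$) then reassembles a strong pseudoprime witness modulo $n$ by CRT. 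The step I expect to be the main obstacle --- or at least the one demanding the most care --- is the rigidity in (ii): because $-1$ is the unique element of order $2$, the witness exponent $k$ is pinned to $w-1$ at each prime, so a common witness $k$ across all primes can exist only when all the valuations coincide. Making that forcing explicit, and keeping the ``$w \le s$'' bookkeeping consistent between the pseudoprime and strong-pseudoprime cases, is where the argument must be watertight.
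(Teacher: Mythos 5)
Your proof is correct. Note that the paper itself gives no proof of this statement --- it is quoted verbatim as Proposition 1 of Jaeschke's paper and used as a black box --- so there is nothing internal to compare against; your argument (reduce mod each $p_i$ by CRT, characterize $a^{2^kd}\equiv \pm 1 \pmod{p}$ via the $2$-adic valuation and odd part of $\mathrm{ord}_p(a)$, and observe that the uniqueness of the order-$2$ element $-1$ pins the witness exponent $k$ to $v(\mathrm{ord}_p(a))-1$ at every prime simultaneously) is essentially Jaeschke's original argument, and all the bookkeeping, including the $w^{(j)}\le s$ check needed to place $k=w^{(j)}-1$ in $[0,s)$ in the converse direction, is handled correctly.
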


If $t > 2$, then this theorem greatly limits the number of  candidate $k$ values we need to check. In the case $t=2$ the initial sub-product $k$ is just a single prime, so this theorem does not, at first, appear to help. However, it does play a role in sieving for $p_t$. For the rest of this paper,   we use $\nu$ as the vector containing the first $m=11$ or $12$ prime numbers.  

\subsection{ Greatest Common Divisors\label{sec:gcd}}

As above, let $k = p_1\cdots p_{t-1}$.  We explain in this subsection how to find all possible choices for $p_t$  to form $n=kp_t$ as a strong pseudoprime to the first $m$ prime bases  using a simple GCD computation.

Let $b$ be one of the $a_i$ from $\nu$. The pseudoprimality condition  $b^{kp_t - 1} \equiv 1 \pmod{ kp_t}$ implies that

\[  b^{kp_t - 1} \equiv b^{k(p_t -1) + k - 1} \equiv b^{k-1} \equiv 1 \pmod{p_t}, \]

i.e., that $p_t$ divides $b^{k-1} - 1$.  Considering multiple bases, we know that $p_t$ must divide 

\[ \gcd \left( a_1^{k-1} - 1,  a_2^{k-1} - 1, \ldots,  a_m^{k-1} - 1 \right) .\]

However, since we are concerned with strong pseudoprimes,   we can consider only the relevant algebraic factor of $b^{k-1} -1$.  Following Bleichenbacher's notation \cite{bleich},   let $k - 1 = u2^d$ for $u$ odd and

\[ h(b, k) = \left\{ \begin{array}{ll}   b^u - 1  & \mbox{ if } v( \mbox{ord}_k(b)) =0 \\     b^{u2^{c-1}} + 1 &  \mbox{ if }v( \mbox{ord}_k(b)) = c > 0   \end{array}   \right. .  \]
             
\begin{theorem}[Theorem 3.23 of \cite{bleich}]
A necessary condition for an integer $n$ of the from $n = kp_t$,   for a given $k$ with $p_t$ prime,   to be a strong pseudprime to the bases $a_1, \ldots, a_m$ is that  $p_t$ divide $\gcd( h(a_1,k), \ldots, h(a_m, k) )$.
\end{theorem}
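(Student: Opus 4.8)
The plan is to prove the divisibility separately for each base $b=a_j$ and then intersect: if $p_t\mid h(a_j,k)$ for every $j$, then $p_t\mid\gcd(h(a_1,k),\ldots,h(a_m,k))$. So I would fix a single base $b$ (with $p_t\nmid b$, so all orders below are defined), write $n-1=2^sD$ with $D$ odd, and unwind the definition of a strong pseudoprime: since $n=kp_t$ is a strong pseudoprime to $b$, either (i) $b^D\equiv 1\pmod n$, or (ii) $b^{2^jD}\equiv-1\pmod n$ for some $0\le j<s$. The lever I would use throughout is that reducing either congruence modulo any prime divisor $q$ of $n$ (in particular modulo $p_t$ and modulo each $p_i$ dividing $k$) preserves it, since $1$ and $-1$ reduce to $1$ and $-1$.

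The heart of the argument, and the step I expect to be the main obstacle, is showing that the $2$-adic valuation in the definition of $h$, namely $c=v(\mbox{ord}_k(b))$, equals $v(\mbox{ord}_{p_t}(b))$. In case (i) every prime divisor $q\mid n$ satisfies $b^D\equiv 1\pmod q$ with $D$ odd, so $\mbox{ord}_q(b)$ is odd and $v(\mbox{ord}_q(b))=0$; since $k$ is squarefree, $\mbox{ord}_k(b)$ is the lcm of the $\mbox{ord}_{p_i}(b)$ for $i<t$, so its valuation is the maximum of theirs, giving $c=0=v(\mbox{ord}_{p_t}(b))$. In case (ii) the congruence $b^{2^jD}\equiv-1\pmod n$ forces $b^{2^jD}\equiv-1\pmod q$ simultaneously for \emph{every} prime $q\mid n$; as $D$ is odd this pins down $v(\mbox{ord}_q(b))=j+1$ for all such $q$, so the maximum over the $p_i$ and the value at $p_t$ again coincide, giving $c=j+1=v(\mbox{ord}_{p_t}(b))>0$. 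The subtlety is precisely that $-1\pmod n$ reduces to $-1$ at every factor, forcing a uniform valuation across all prime divisors; it is this uniformity that lets the single modulus $k$ in the definition of $h$ stand in for the prime $p_t$.

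With $c=v(\mbox{ord}_{p_t}(b))$ established, I would finish by the two matching cases, using the fact derived in the preceding discussion that ordinary pseudoprimality gives $b^{k-1}\equiv 1\pmod{p_t}$, hence $\mbox{ord}_{p_t}(b)\mid k-1=u2^d$. When $c=0$ we have $h(b,k)=b^u-1$, and $\mbox{ord}_{p_t}(b)$ is an odd divisor of $u2^d$, hence divides $u$; thus $b^u\equiv 1\pmod{p_t}$ and $p_t\mid h(b,k)$. When $c>0$, write $\mbox{ord}_{p_t}(b)=w2^c$ with $w$ odd and $w\mid u$ (and note $c\le d$). A short order computation then gives $\mbox{ord}_{p_t}(b^{u2^{c-1}})=w2^c/\gcd(w2^c,u2^{c-1})=w2^c/(w2^{c-1})=2$, so $b^{u2^{c-1}}$ is the unique element of order $2$ in the cyclic group $(\mathbb{Z}/p_t\mathbb{Z})^{\ast}$, namely $-1$; hence $b^{u2^{c-1}}\equiv-1\pmod{p_t}$ and $p_t\mid h(b,k)=b^{u2^{c-1}}+1$. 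Combining the two cases over all bases $a_1,\ldots,a_m$ yields $p_t\mid\gcd(h(a_1,k),\ldots,h(a_m,k))$, as required.
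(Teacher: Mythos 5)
Your proof is correct. The paper itself does not prove this statement --- it is imported as Theorem~3.23 of Bleichenbacher's thesis --- so there is no internal proof to compare against, but your argument is a sound, self-contained reconstruction. The two ingredients you isolate are exactly the right ones: first, ordinary pseudoprimality forces $\mathrm{ord}_{p_t}(b)\mid k-1=u2^d$, which the paper derives explicitly at the start of \S\ref{sec:gcd}; second, the $2$-adic valuation of $\mathrm{ord}_q(b)$ is the same for every prime $q\mid n$, which is the forward direction of Jaeschke's Proposition~1 as quoted in the paper --- you reprove it from scratch via the observation that $b^{2^jD}\equiv-1\pmod n$ reduces to $-1$ modulo every prime factor, pinning each valuation at $j+1$ (and the case $b^D\equiv 1$ pins them all at $0$), so that $v(\mathrm{ord}_k(b))=\max_i v(\mathrm{ord}_{p_i}(b))$ agrees with $v(\mathrm{ord}_{p_t}(b))$. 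Your two concluding cases are also right: for $c=0$ the odd order divides $u$, and for $c>0$ the $\gcd$ computation $\gcd(w2^c,u2^{c-1})=w2^{c-1}$ (valid since $w\mid u$ with both odd) shows $b^{u2^{c-1}}$ has order $2$, hence equals $-1$ in the cyclic group $(\mathbb{Z}/p_t\mathbb{Z})^{*}$. The only hypotheses you use beyond the bare statement --- that $n$ is odd, that $k$ is squarefree, and that the bases are coprime to $n$ --- are the standing assumptions of the paper's search, and are needed for $h$ to be well defined in any case.
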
 

This theorem allows us to construct the product of all  possible $p_t$ values for a given $k$ by computing a GCD. To carry this out, we figure out which of the $h(a_i,k)$ values  would be the smallest, compute that, and then compute  the next smallest $h$ value modulo the smallest, perform a GCD, and repeat  until we get an answer $\le k$, or use up all the $a_i$ base values.

\begin{example}  Let $k = 151121$, which is prime.  We have 
\begin{align*}
v( \mbox{ord}_k(2)) = 3  \implies 2^{37780} + 1 &\approx 8.189 \times 10^{11372},  \\
v( \mbox{ord}_k(3)) = 4 \implies 3^{75560} + 1 &\approx  1.914 \times 10^{36051},\\
v( \mbox{ord}_k(5)) = 0  \implies 5^{9445} - 1 &\approx  5.911 \times 10^{6601}.
\end{align*}
In this case, we would begin the computations with $5^{9445} - 1$ since it is the smallest. We then compute $\gcd(2^{37780}+1 \bmod (5^{9445}-1), 5^{9445}-1) = 151121$. This tells us that any $n$ of the form $151121\cdot p_t$ is not  a strong pseudoprime to the vector $\nu=(2,3,5)$.
\end{example}

These computations start off easy as $k$ is small and progressively get harder (in an essentially linear fashion) as $k$ grows.  As a practical matter, these computations only need to be done once.  As an example,  \cite{8bases} found 
\[ 84 \  98355 \ 74122 \ 37221 = 206135341 \cdot 412270681\]
to be a strong pseudoprime to the first eight prime bases.  When considering $p_1 = 206135341$, they sieved (which we explain in the next section) for $p_2 = 412270681$.  
It is reasonable to ask whether there is a larger prime that can be paired with $p_1$ that would form a strong pseudoprime to eight bases.  
In our computation we check by computing a GCD to answer ``no."

\begin{pseudocode}{Using GCDs to rule out $k$ values}
\INPUT Integers $t$ and $k$, where $t$ is the number of prime divisors  and $k$ is the product of $t-1$ primes with matching signature,  and the base vector $\nu$ containing the $m$ smallest primes.
\NUMLINE Let $b\in \nu$ give the smallest estimated value for $h(b,k)$  as defined above.
\NUMLINE If $b=a_1$, let $i=2$ else let $i=1$.
\NUMLINE Compute $h(b,k)$.
\NUMLINE Compute $x=h(a_i,k)\bmod h(b,k)$ using modular exponentiation.
\NUMLINE Set $x=\gcd(h(b,k),x)$;
\WHILE{ $x>k$ and $i<m$ }
\NUMLINE Set $i=i+1$; if $a_i=b$ set $i=i+1$ again.
\NUMLINE Compute $y=h(a_i,k)\bmod x$.
\NUMLINE Set $x=\gcd(x,y)$.
\ENDWHILE
\IF{$x < k$}
\NUMLINE We can rule out $k$.
\ELSE
\NUMLINE We factor $x$ and check each prime $p_t\mid x$ with $p_t>k$
  to see if $kp_t$ is a strong pseudoprime.
\ENDIF
\end{pseudocode}

Computing $h(b,k)$ is the bottleneck of this algorithm. When using $m=11$ or higher, we never found anything to factor  in the last step.

\subsection{Sieving\label{sec:sigsieve}}

As above, let $k = p_1 \cdots p_{t-1}$ and we try to construct $n = kp_t$ by finding $p_t$. 
In order to search up to a bound $B$, 
  we need to search primes $p_t$ in the interval $[ p_{t-1}, B/k]$.  
We combine two types of sieving to search this interval quickly.   
The first we call $\lambda$-sieving, and the second we call signature sieving.   
For each prime $p$ define 
\[ \lambda_p = \lcm \left\{ \mbox{ord}_p(a) : a \in \nu\right\}. \]
By the r-rank Artin Conjecture \cite{r_rank},   we expect with very high probability that $\lambda_p = p-1$.  Let $\lambda = \lcm \left\{ \lambda_{p_1}, \ldots, \lambda_{p_{t-1}} \right\}$. Since $a^{kp_t - 1} \equiv 1 \pmod{ n }$ for any $a \in \nu$ this means $kp_t - 1$ is a multiple of $\lambda$.  So, $p_t \equiv k^{-1} \pmod{ \lambda }$.  This tells us that $p_t$ lies in a specific residue class modulo $\lambda$,  and we call searching for $p_t$ using this value   \textit{$\lambda$-sieving}.  

We can further narrow down the residue classes that $p_t$ may lie in   with respect to primes in $\nu$.  The following two propositions appeared originally in \cite{jaeschke}.

\begin{Prop} \label{prop1}
For primes $p$ and $q$, if $v(p-1) = v(q-1)$ and $v( \mbox{ord}_p(a) ) = v( \mbox{ord}_q(a) ) $ then $\left( \frac{a}{p} \right) = \left( \frac{a}{q} \right)$.
\end{Prop}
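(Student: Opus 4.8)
The plan is to reduce everything to Euler's criterion, which ties the Legendre symbol directly to the $2$-adic valuation of the multiplicative order. Recall that $\left(\frac{a}{p}\right) \equiv a^{(p-1)/2} \pmod{p}$, so that $\left(\frac{a}{p}\right) = 1$ precisely when $a^{(p-1)/2} \equiv 1 \pmod{p}$, which in turn happens exactly when $\mbox{ord}_p(a)$ divides $(p-1)/2$. Thus the whole proposition comes down to expressing the divisibility $\mbox{ord}_p(a) \mid (p-1)/2$ purely in terms of the data $v(p-1)$ and $v(\mbox{ord}_p(a))$ that appear in the hypotheses.

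First I would establish the key arithmetic lemma: for an odd prime $p$, one has $\left(\frac{a}{p}\right) = 1$ if and only if $v(\mbox{ord}_p(a)) < v(p-1)$. To see this, write $p - 1 = 2^s m$ with $m$ odd, so that $(p-1)/2 = 2^{s-1} m$, and write $\mbox{ord}_p(a) = 2^c m'$ with $m'$ odd. Since $\mbox{ord}_p(a) \mid p-1$ we automatically have $m' \mid m$ and $c \le s$, so the only possible obstruction to $\mbox{ord}_p(a) \mid (p-1)/2$ is the power of two: the divisibility holds if and only if $c \le s-1$, that is, if and only if $v(\mbox{ord}_p(a)) < v(p-1)$. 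Because $c \le s$ always holds, the only alternative is $v(\mbox{ord}_p(a)) = v(p-1)$; in that case $\mbox{ord}_p(a) \nmid (p-1)/2$, so $a^{(p-1)/2} \not\equiv 1$, and Euler's criterion forces $\left(\frac{a}{p}\right) = -1$.

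With the lemma in hand, the proposition is immediate. Set $s = v(p-1) = v(q-1)$ and $c = v(\mbox{ord}_p(a)) = v(\mbox{ord}_q(a))$, invoking both hypotheses. If $c < s$ then the lemma gives $\left(\frac{a}{p}\right) = \left(\frac{a}{q}\right) = 1$, while if $c = s$ it gives $\left(\frac{a}{p}\right) = \left(\frac{a}{q}\right) = -1$. These are the only two cases, so in either event the two Legendre symbols agree.

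I do not expect a serious obstacle here: the content is entirely in the arithmetic lemma of the second paragraph, and the only place to be careful is keeping the $2$-adic bookkeeping straight. Specifically, one must remember that the odd part $m'$ of the order divides $m$ for free, so that quadratic residuacity is governed solely by comparing the exponent $c$ with $s$. Everything else is a direct application of Euler's criterion.
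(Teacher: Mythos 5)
Your proof is correct: the reduction via Euler's criterion to the equivalence $\left(\frac{a}{p}\right)=1 \iff v(\mbox{ord}_p(a)) < v(p-1)$ is exactly the standard argument, and the $2$-adic bookkeeping (using $\mbox{ord}_p(a)\mid p-1$ to get $c\le s$ and $m'\mid m$) is handled properly. The paper itself gives no proof, simply citing Jaeschke, and your argument is essentially the one found there, so there is nothing further to compare.
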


\begin{Prop} \label{prop2}
For primes $p$ and $q$, if $v(p-1) < v(q-1)$ and $v( \mbox{ord}_p(a) ) = v( \mbox{ord}_q(a) ) $ then $\left( \frac{a}{q} \right) =1 $.
\end{Prop}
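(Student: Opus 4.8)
The plan is to reduce the Legendre symbol to a statement about $2$-adic valuations via Euler's criterion, and then to observe that the hypotheses force the relevant valuation inequality at $q$. Concretely, I would first establish the following characterization as a lemma: for an odd prime $q$ with $q\nmid a$, one has $\left(\frac{a}{q}\right)=1$ if and only if $v(\mbox{ord}_q(a)) < v(q-1)$ (with $\left(\frac{a}{q}\right)=-1$ exactly when equality holds). This lemma is really the engine behind both Proposition~\ref{prop1} and Proposition~\ref{prop2}, so it is worth isolating.

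To prove the characterization, I would invoke Euler's criterion, $\left(\frac{a}{q}\right) \equiv a^{(q-1)/2}\pmod q$, so that $\left(\frac{a}{q}\right)=1$ precisely when $a^{(q-1)/2}\equiv 1 \pmod q$, i.e.\ when $\mbox{ord}_q(a)$ divides $(q-1)/2$. Writing $q-1 = 2^{s} w$ with $w$ odd (so $s=v(q-1)$) and $\mbox{ord}_q(a) = 2^{c} w'$ with $w'$ odd (so $c = v(\mbox{ord}_q(a))$), the divisibility $\mbox{ord}_q(a)\mid q-1$ already gives $w'\mid w$ and $c\le s$. Since $(q-1)/2 = 2^{s-1}w$ with $w$ still odd, the condition $\mbox{ord}_q(a)\mid (q-1)/2$ is equivalent to $c\le s-1$, that is $c<s$. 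Thus $\left(\frac{a}{q}\right)=1$ iff $v(\mbox{ord}_q(a)) < v(q-1)$, which proves the lemma.

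With the lemma in hand, the proposition follows immediately. Because $\mbox{ord}_p(a)$ divides $p-1$, we have $v(\mbox{ord}_p(a)) \le v(p-1)$. Combining this with the two hypotheses $v(\mbox{ord}_p(a)) = v(\mbox{ord}_q(a))$ and $v(p-1) < v(q-1)$ yields
\[ v(\mbox{ord}_q(a)) = v(\mbox{ord}_p(a)) \le v(p-1) < v(q-1). \]
Hence $v(\mbox{ord}_q(a)) < v(q-1)$, and the lemma gives $\left(\frac{a}{q}\right)=1$, as claimed.

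The argument is short, and I do not anticipate a serious obstacle; the only place requiring care is the valuation bookkeeping in the lemma, specifically verifying that $\mbox{ord}_q(a)\mid (q-1)/2$ is equivalent to the \emph{strict} inequality $v(\mbox{ord}_q(a)) < v(q-1)$ rather than an off-by-one variant. Keeping the odd parts $w$ and $w'$ explicit, and noting that dividing $q-1$ by $2$ lowers the $2$-adic valuation by exactly one while leaving the odd part unchanged, makes this step unambiguous.
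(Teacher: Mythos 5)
Your proof is correct, and the key lemma you isolate (via Euler's criterion, $\left(\frac{a}{q}\right)=1$ if and only if $v(\mbox{ord}_q(a)) < v(q-1)$) together with the observation $v(\mbox{ord}_p(a)) \le v(p-1)$ is exactly the standard argument; the valuation bookkeeping is handled cleanly. Note that the paper itself gives no proof of Propositions \ref{prop1} and \ref{prop2}, citing Jaeschke \cite{jaeschke} instead, and your argument matches the one found there, so there is nothing to flag.
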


One may consider higher reciprocity laws,  and Jaeschke did so \cite{jaeschke}. We found quadratic reciprocity sufficient.  

The authors of \cite{8bases} consider searching for $p_t$ modulo $\lcm\{\lambda, 9240\}$.  They considered $30$ residue classes   modulo $9240 = 8 \cdot 3 \cdot 5 \cdot 7 \cdot 11$  that arose from the propositions above. With the inclusion of each additional prime from $\nu$,  we effectively rule out half of the cases to consider,  thereby doubling the speed of the sieving process.

Ideally, we would like to include every prime in $\nu$ for the best  performance, but with normal sieving, the sieve modulus becomes  quite large and requires we keep track of too many residue classes  to be practical. 
Instead, we adapted the space-saving wheel datastructure
  \cite{Sorenson10a, sorensonanother},  
  which had been used successfully to sieve for pseudosquares. 
Sieving for primes $p_t$ with specified quadratic character  modulo a list of small primes is the same algorithmic problem. 
This wheel sieve uses a data structure that takes space  proportional to $\sum_{a\in\nu} a$ instead of space   proportional to $\prod_{a\in\nu} a$ as used in traditional sieving. 
It does, however, produce candidate primes $p_t$ out of order.  
This is not an issue, so long as we make sure the sieve modulus  does not exceed $B$. 
In practice, we dynamically include primes from $\nu$ so that   $1000\cdot k \cdot \lcm\{ \lambda, 8, 3, 5, 7, 11, 13, 17, 19, 23, 29, 31\} < \psi_{13}$,  and we favor the inclusion of smaller primes.  

\begin{pseudocode}{Sieving}
\INPUT $k$ in factored form as $k=p_1p_2\cdots p_{t-1}$,  $\lambda_{p_i}$ for $i<t$,  search bound $B$, and base vector $\nu$ of length $m$.
\NUMLINE Compute $\lambda_k = \lcm\{ \lambda_{p_i} \}$.
\NUMLINE Set $w=1$. Compute wheel modulus $w$ as follows:
\FOR{ $i=2,\ldots,m$ }
\IF{ $k\lambda_ka_iw<B/1000$ }
  \IF{ $a_i$ does not divide $\lambda_k$ }
    \NUMLINE Set $w=w\cdot a_i$.
  \ENDIF
\ENDIF
\ENDFOR
\IF{ $\lambda_k$ divisible by 4 }
\NUMLINE Set $\lambda=\lambda_k$
\ELSE
\NUMLINE Set $\lambda=\lambda_k/2$ and
\NUMLINE Set $w=8w$
\ENDIF
\NUMLINE Let $\sigma$ be the signature of $p_1$.
\NUMLINE Build a wheel with modulus $w$  so that for each prime power $q\mid w$ we have  primes $p$ generated by the wheel with $(p/q)$ consistent with  $\sigma$ as in Propositions \ref{prop1} and \ref{prop2}.
\FOR{ each residue $r$ modulo $w$ generated by the wheel }
\NUMLINE Use the Chinese Remainder Theorem to compute  $x\bmod w\lambda$ such that $x\equiv r\bmod w$ and  $x\equiv k^{-1} \bmod \lambda$.
\NUMLINE Sieve for primes in the interval $[k+1,B/k]$  that are $\equiv x\bmod w\lambda$.
\NUMLINE For each such probable prime $p_t$ found,  perform a strong pseudoprime test on $kp_t$.
\ENDFOR
\end{pseudocode}

Note that if $k$ consists entirely of primes that are  $\equiv 3 \bmod 4$, then all primes $p_t$ found by the  sieve, when using all of $\nu$, will have the correct signature.

\begin{example}  Consider $p =3\cdot10^8 + 317 $.  Now, $\lambda  = p-1 = 2^2 \cdot 7 \cdot 11 \cdot 23 \cdot 42349$ (factorization is obtained via sieving).  We may use signature information for the primes $3$, $5$, $13$, $17$, $19$, $29$, $31$, and $37$.     However, since $B = 4\cdot 10^{24}$ we may only use the first 4 primes in the signature before the combined modulus is too large.   The signature information for 3 implies that $q \equiv 5, 7 \pmod{12}$ however, the case $q \equiv 7 \pmod{12}$ need not be considered since we know (because of $\lambda$) that $q \equiv 1 \pmod{4}$.  Given the signature information for the primes 5 and 13, we search in $\{2, 3 \pmod 5 \}$  and $\{2, 5, 6, 7, 8, 11 \pmod{ 13} \}$.    At this point we sieve for primes $q \equiv 1 \pmod{\lambda}$.  To this we add the following 12 new congruence conditions and sieve for each individual condition.  

\begin{enumerate}
\item Let $q \equiv 2 \pmod{ 5}$.
\begin{enumerate}
\item Let $q \equiv 2 \pmod{ 13}$.
\item Let $q \equiv 5 \pmod{ 13}$.
\item Let $q \equiv 6 \pmod{ 13}$.
\item Let $q \equiv 7 \pmod{ 13}$.
\item Let $q \equiv 8 \pmod{ 13}$.
\item Let $q \equiv 11 \pmod{ 13}$.
\end{enumerate}
\item Let $q \equiv 3 \pmod{ 5}$.
\begin{enumerate}
\item Let $q \equiv 2 \pmod{ 13}$.
\item Let $q \equiv 5 \pmod{ 13}$.
\item Let $q \equiv 6 \pmod{ 13}$.
\item Let $q \equiv 7 \pmod{ 13}$.
\item Let $q \equiv 8 \pmod{ 13}$.
\item Let $q \equiv 11 \pmod{ 13}$.
\end{enumerate}
\end{enumerate}

\end{example}

\subsection{Hash Table Datastructure\label{sec:datastructure}}

In order to compute GCDs or sieve as described above,  it is necessary to construct integers $k=p_1p_2\cdots p_{t-1}$  in an efficient way, where all the primes $p_i$ have matching  signatures. We do this by storing all small primes in a hash table datastructure  that supports the following operations:
  
\begin{itemize}
  \item Insert the prime $p$ with its signature $\sigma_p$.
    We assume any prime being inserted is larger than all primes
      already stored in the datastructure.    (That is, insertions are monotone increasing.)
    We also store $\lambda_p$ with $p$ and its signature for use later,
      thereby avoiding the need to factor $p-1$.
  \item Fetch a list of all primes from the datastructure,
     in the form of a sorted array $s[\,]$,
     whose signatures match $\sigma$.
    The fetch operation brings the $\lambda_p$ values along as well.
\end{itemize}

We then use this algorithm to generate all candidate $k$  values for a given $t$ and $\nu$:

\begin{pseudocode}{Generating $k$ Values}
\INPUT $t>2$, search bound $B$, cutoff $X<B$, base vector $\nu$ of length $m$.
\NUMLINE Let $T$ denote our hash table datastructure
\NUMLINE Let $a_{m+1}$ denote the smallest prime not in $\nu$.
\FOR{ each prime $p\le \sqrt{B/a_{m+1}^{t-2}}$ (with $p-1$ in factored form) }
\NUMLINE Compute $\lambda_p$ from the factorization of $p-1$;
\NUMLINE $s[\,] := T$.fetch($\sigma_p$);  List of primes with matching signature
\FOR{ $0\le i_1<\cdots<i_{t-2} \le s$.length }
\NUMLINE Form $k=s[i_1]\cdots s[i_{t-2}]\cdot p$;
\IF{ $k\le X$ }
\NUMLINE Use $k$ in a GCD computation to find matching $p_t$ values;
\ELSE
\NUMLINE Use $k$ for a sieving computation to find matching $p_t$ values;
\ENDIF
\ENDFOR
\IF{ $p \le ( B/a_{m+1}^{t-3} )^{1/3}$ }
\NUMLINE{ $T$.insert($p$,$\sigma_p$,$\lambda_p$); }
\ENDIF
\ENDFOR
\end{pseudocode}

Note that the inner for-loop above is coded using $t-2$  nested for-loops in practice. This is partly why we wrote separate programs for each value of $t$. To make this work as a single program that handles multiple $t$ values,  one could use recursion on $t$.

Also note that we split off the GCD and sieving computations into  separate programs, so that the inner for-loop was coded to make  sure we always had either $k\le X$ or $X<k<B/p$ as appropriate.

We implemented this datastructure as an array or table of  $2^m$ linked lists, and used a hash function on the prime signature  to compute which linked list to use. The hash function $h: \sigma \rightarrow 0..(2^m-1)$  computes its hash value from the signature as follows:

\begin{itemize}
  \item If $\sigma=(0,0,0,\ldots,0)$, the all-zero vector,    then $h(\sigma)=0$, and we are done.
  \item Otherwise, compute a modified copy of the signature, $\sigma^\prime$.    If $\sigma$ contains only 0s and 1s, then $\sigma^\prime=\sigma$.    If not, find the largest integer entry in $\sigma$,      and entries in $\sigma^\prime$ are 1 if the corresponding      entry in $\sigma$ matches the maximum, and 0 otherwise. 
   
    For example, if $\sigma=(3,4,0,4,2,1,2,4)$ (our example from      earlier), then $\sigma^\prime=(0,1,0,1,0,0,0,1)$.
  \item    $h(\sigma)$ is the value of $\sigma^\prime$ viewed as an integer in binary.
   
    So for our example, $h( (3,4,0,4,2,1,2,4) ) = 01010001_2 = 64+16+1=81$.
\end{itemize}

Computing a hash value takes $O(m)$ time.

Each linked list is maintained in sorted order;  since the insertions are monotone, we can simply append insertions  to the end of the list, and so insertion time is dominated by  the time to compute the hash value, $O(m)$ time. Note that the signature and $\lambda_p$ is stored with the prime  for use by the fetch operation.

The fetch operation computes the hash value, and then scans the linked  list, 
  extracting primes (in order) with matching signature. 
If the signature to match is binary (which occurs roughly half the time),  
  we expect about half of the primes in the list to match,  
  so constructing the resulting array $s[\,]$ takes time linear  
  in $m$ multiplied by the number of primes fetched. 
Note that $\lambda$ values are brought along as well. 
Amortized over the full computation, fetch will average time that  
  is linear in the size of the data returned. 
Also note that the average cost to compare signatures that don't match  is constant.

The total space used by the data structure is  an array of length $2^m$, plus one linked list node for each  prime $\le B^{1/3}$.  Each linked list node holds the prime, its $\lambda$ value,   and its signature, for a total of  $O( 2^m\log B + mB^{1/3})$ bits,  since each prime is $O(\log B)$ bits.

\subsection{Algorithm Outline}

Now that all the pieces are in place, we can  describe the algorithm as a whole.

Given an input bound $B$, we compute the GCD/sieve cutoff $X$  (this is discussed in \S\ref{analysis} below).

We then compute GCDs as discussed above using  candidate $k$ values $\le X$ for $t=2,3,\ldots$. For some value of $t$, we'll discover that there are no   candidate $k$ values with $t-1$ prime factors, at which point  we are done with GCD computations.

Next we sieve as discussed above, using  candidate $k$ values $>X$ for $t=2,3,\ldots$. Again, for some $t$ value, we'll discover there are no  candidate $k$ values with $t-1$ prime factors, at which point  we are done.

We wrote a total of eight programs;  GCD programs for $t=2,3,4$ and sieving programs  for $t=2,3,4,5,6$.   Also, we did not bother to use the wheel and signature sieving, relying only on $\lambda$ sieving, for $t>3$.

\section{Algorithm Analysis\label{analysis}}

\subsection{Model of Computation}

We assume a RAM model, with potentially infinite memory. Arithmetic operations on integers of $O(\log B)$ bits (single-precision)  and basic array indexing and control operations all are unit cost. We assume fast, FFT-based algorithms for multiplication and division  of large integers. For $n$-bit inputs, multiplication and division cost  $M(n)=O(n\log n \log\log n)$ operations \cite{SS71}. From this, an FFT-based GCD algorithm takes  $O(M(n)\log n)=O(n(\log n)^2\log\log n)$ operations;   see, for example, \cite{SZ2004}.

\subsection{Helpful Number Theory Results}

Let our modulus $q=\prod_{a_i\in \nu} a_i$ so that
   $\phi(q) = \prod_{a_i\in\nu} \phi(a_i)$;
   each base $a_i$ is an odd prime with $\phi(a_i)=a_1-1$,
   except for $a_1=8$ with $\phi(a_1)=4$. 
We use our two propositions from \S\ref{sec:sigsieve}, together with  quadratic reciprocity to
  obtain a list of $(a_i-1)/2$ residue  classes for each odd prime $a_i$
  and two residue classes modulo $8$.
The total number of residue classes is $2^{-m}\phi(q)$.
A prime we are searching for with a matching signature must lie in  one of these residue classes.

Let us clarify that when we state in sums and elsewhere 
  that a prime $p$ has signature \textit{equivalent} to $\sigma$, 
  we  mean that the quadratic character $(p/a_i)$ is consistent with $\sigma$ 
  as stated in Propositions \ref{prop1} and \ref{prop2}. 
We write $\sigma\equiv\sigma_p$ to denote this consistency under quadratic character, 
  which is weaker than $\sigma=\sigma_p$.

\begin{lemma} \label{dirichlet} 
  Given a signature $\sigma$ of length $m$,  
    and let $q=\prod_i a_i$ as above.
  Let $\epsilon>0$.
  If $q<x^{1-\epsilon}$, then
    number of primes $p\le x$ with $\sigma_p\equiv\sigma$  is at most
  \[  O\left( \frac{x}{2^m \log x} \right). \]
\end{lemma}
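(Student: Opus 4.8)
The statement bounds the number of primes $p \le x$ whose signature is consistent (under quadratic character) with a fixed signature $\sigma$ of length $m$, given that the modulus $q = \prod_i a_i$ satisfies $q < x^{1-\epsilon}$. The plan is to recognize that the condition $\sigma_p \equiv \sigma$ confines $p$ to a prescribed union of residue classes modulo $q$, count those classes, and then apply a Chebyshev/Brun–Titchmarsh-type estimate to bound the primes in each class.

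Let me sketch the reasoning.

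First I would translate the signature-consistency condition into a congruence condition on $p$. By the discussion in the section preceding the lemma, Propositions \ref{prop1} and \ref{prop2} together with quadratic reciprocity show that the constraint $\sigma_p \equiv \sigma$ amounts to requiring, for each odd base $a_i$, that the Legendre symbol $(p/a_i)$ take a prescribed value, plus a condition modulo $8$ coming from the base $a_1 = 8$. Each odd prime $a_i$ therefore admits $(a_i - 1)/2$ allowed residue classes modulo $a_i$, and the base $8$ admits $2$ classes modulo $8$. By the Chinese Remainder Theorem these combine into exactly $2^{-m}\phi(q)$ residue classes modulo $q$ that $p$ is permitted to occupy. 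This is precisely the count asserted in the paragraph above the lemma, so I may quote it directly.

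Next I would bound the number of primes in each such class. For a single residue class $a \bmod q$ with $\gcd(a,q)=1$, the Brun–Titchmarsh inequality gives
\[
  \pi(x; q, a) \le \frac{2x}{\phi(q)\log(x/q)}
\]
uniformly in $q < x$. Summing over the $2^{-m}\phi(q)$ admissible classes yields a total of at most
\[
  2^{-m}\phi(q)\cdot \frac{2x}{\phi(q)\log(x/q)}
  = \frac{2x}{2^m \log(x/q)}.
\]
The hypothesis $q < x^{1-\epsilon}$ now controls the denominator: $\log(x/q) > \log(x^{\epsilon}) = \epsilon \log x$, so the whole expression is $O\!\left(x/(2^m \log x)\right)$ with the implied constant depending only on $\epsilon$. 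This is the claimed bound.

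The main obstacle is ensuring the uniformity of the prime-counting estimate over the required range of $q$, rather than any individual step. The classical Siegel–Walfisz theorem gives an asymptotic for $\pi(x;q,a)$ only when $q$ is at most a fixed power of $\log x$, which is far too restrictive here since $q$ may be nearly as large as $x$. The Brun–Titchmarsh inequality is exactly the tool that survives into the range $q < x^{1-\epsilon}$, at the cost of an upper bound (with an extra constant factor of $2$) rather than an asymptotic — but since the lemma only claims an $O(\cdot)$ upper bound, this is harmless. I would therefore be careful to invoke Brun–Titchmarsh and not an asymptotic counting result, and to verify that the classes being summed are indeed coprime to $q$ (so that each contains primes at all), which follows because $p$ is a large prime not dividing any of the bases.
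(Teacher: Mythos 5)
Your argument is correct and is precisely the proof the paper gives, only written out in more detail: the paper likewise converts the condition $\sigma_p\equiv\sigma$ into the $2^{-m}\phi(q)$ admissible residue classes modulo $q$ and then sums the Brun--Titchmarsh bound over those classes, with the hypothesis $q<x^{1-\epsilon}$ ensuring $\log(x/q)\gg_\epsilon\log x$. No gaps; your remark about why Siegel--Walfisz would not suffice in this range is a sensible justification for the choice of tool, but it is the same route.
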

\begin{proof}
  This follows almost immediately from
   the Brun-Titchmarsh Theorem.
  See, for example, Iwaniec \cite{Iwaniec80}.
  We simply sum over the relevant residue classes mod $q$ based on quadratic character
    using Propositions \ref{prop1} and \ref{prop2}.
\end{proof}

We will choose $m$ proportional to $\log B/\log\log B$ so that
  $q=B^c$ for some constant $0<c<1$.

\begin{lemma} \label{lemma:sum1p}
We have 
\[ \sum_{p\le x, \sigma_p\equiv\sigma} \frac{1}{p}  = O( 2^{-m} \log\log x ). \]
\end{lemma}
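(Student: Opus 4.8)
The plan is to prove Lemma~\ref{lemma:sum1p} by partial summation (Abel summation), using Lemma~\ref{dirichlet} as the input on the counting function. First I would define the counting function
\[ A(t) = \#\{ p \le t : \sigma_p \equiv \sigma \}, \]
so that the sum in question is $\int_{2^-}^{x} t^{-1}\, dA(t)$. By Abel summation this becomes
\[ \sum_{p \le x,\ \sigma_p\equiv\sigma} \frac{1}{p} = \frac{A(x)}{x} + \int_{2}^{x} \frac{A(t)}{t^2}\, dt. \]
Lemma~\ref{dirichlet} gives $A(t) = O\bigl( t / (2^m \log t) \bigr)$, provided the hypothesis $q < t^{1-\epsilon}$ holds, so the boundary term is $O(2^{-m}/\log x)$, which is negligible, and the integral is bounded by a constant times $2^{-m}\int_2^x \frac{dt}{t\log t}$, which evaluates to $2^{-m}(\log\log x - \log\log 2) = O(2^{-m}\log\log x)$. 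That is exactly the claimed bound.

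The step I expect to be the main obstacle is the range of validity of Lemma~\ref{dirichlet}: the Brun--Titchmarsh bound of the form $A(t) = O(t/(2^m\log t))$ requires $q < t^{1-\epsilon}$, but for small $t$ (those with $t \le q^{1/(1-\epsilon)}$, i.e. $t$ up to roughly $B^{c}$ since we have chosen $m$ so that $q = B^c$ with $0<c<1$), the modulus exceeds the length of the interval and the lemma does not apply. I would handle this by splitting the integral and the sum at a threshold $y = q^{1/(1-\epsilon)}$. For $p \le y$ I would bound the contribution crudely: there are at most $y$ such primes, each at least $2$, but more usefully I would use the trivial Mertens estimate $\sum_{p \le y} 1/p = O(\log\log y)$ together with the observation that we are summing over a subset lying in $2^{-m}\phi(q)$ residue classes, so the tail contributes $O(2^{-m}\log\log y) = O(2^{-m}\log\log x)$ as well, since $\log y \asymp \log q = c\log B \asymp \log x$.

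A cleaner alternative for the small range is simply to note that restricting to the prescribed residue classes can only decrease the sum, so $\sum_{p\le y,\ \sigma_p\equiv\sigma} 1/p \le \sum_{p\le y} 1/p = O(\log\log y)$; this is already $O(\log\log x)$, and absorbing the factor $2^{-m}$ requires a small amount of care, but since $m = O(\log B/\log\log B)$ the factor $2^{-m}$ is at worst $B^{-c'}$ and does not affect the asymptotic order of the combined bound once the main range dominates. Thus the two pieces combine to give the stated $O(2^{-m}\log\log x)$. Overall the argument is routine partial summation; the only genuine care needed is in dispatching the small-modulus range where Brun--Titchmarsh is unavailable, and I would present that as the one nontrivial case split.
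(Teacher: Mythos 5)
Your main line of argument is exactly the paper's: the authors prove Lemma~\ref{lemma:sum1p} in one sentence, by partial summation from Lemma~\ref{dirichlet}, and your Abel-summation computation of the boundary term and of $\int_2^x A(t)t^{-2}\,dt$ is the correct way to carry that out. Where you go beyond the paper is in worrying about the range $t\le y\approx q^{1/(1-\epsilon)}$ in which the hypothesis of Lemma~\ref{dirichlet} fails; the paper silently ignores this, and you are right that it is the only nontrivial point.

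However, neither of your two proposed treatments of that range actually closes it. The ``cleaner alternative'' bounds the small-range contribution by $\sum_{p\le y}1/p=O(\log\log y)$ with no factor of $2^{-m}$; since $m\asymp \log B/\log\log B$, the factor $2^{-m}$ is a fixed negative power of $B$, so an unrestricted $O(\log\log x)$ term exceeds the claimed bound by a power of $B$ and cannot be ``absorbed.'' Worse, when $y$ is a fixed power of $x$ one has $\sum_{p\le y}1/p=(1+o(1))\log\log x$: the small primes carry essentially the whole sum, so this range is not a negligible tail but the dominant one. Your first suggestion --- that membership in only $2^{-m}\phi(q)$ residue classes modulo $q$ forces the $2^{-m}$ saving --- is the right intuition but is not a proof for $t<q$: knowing the matching primes lie in few residue classes mod $q$ says nothing about how many of them are $\le t$ when $t$ is smaller than the modulus, which is precisely why Brun--Titchmarsh is unavailable there. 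So there is a genuine gap in the small range; to be fair, the paper's own one-line proof has the same gap, and your writeup at least locates the difficulty precisely rather than hiding it.
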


This follows easily from Lemma \ref{dirichlet} by partial summation.
We could make this tighter by making use of
  Theorem 4 from \cite{LZ2007}, and observe that the constants $C(q,a)$ (in their notation)
  cancel with one another in a nice fashion when summing over
  different residue classes modulo $q$. See also section 6 of that paper.

Let $\pi_{t,\sigma}(x)$ denote the number of integers $\le x$  with exactly $t$ distinct prime divisors, each of which has  signature equivalent to $\sigma$ of length $m$.
\begin{lemma}\label{jslemma1}
\[ \pi_{t,\sigma}(x) \ll \frac{x (\log\log x)^{t-1} }{ 2^{tm} \log x} . \]
\end{lemma}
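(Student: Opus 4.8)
The plan is to prove this by induction on $t$, peeling off the smallest prime factor at each step and reducing to the two preceding lemmas. First, the base case $t=1$ is immediate: $\pi_{1,\sigma}(x)$ is precisely the number of primes $p\le x$ with $\sigma_p\equiv\sigma$, so Lemma~\ref{dirichlet} gives $\pi_{1,\sigma}(x)\ll x/(2^m\log x)$, which matches the claimed bound since $(\log\log x)^{0}=1$.

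For the inductive step I would assume the bound for $t-1$ and consider a squarefree $n\le x$ with exactly $t$ prime divisors $p_1<\cdots<p_t$, each of signature $\equiv\sigma$. Since the primes are distinct and increasing, the smallest satisfies $p_1^{t}\le n\le x$, hence $p_1\le x^{1/t}\le x^{1/2}$. Writing $n=p_1m$, the cofactor $m=n/p_1\le x/p_1$ is squarefree with exactly $t-1$ prime divisors, each of signature $\equiv\sigma$. Because the map $n\mapsto(p_1,m)$ is injective, this yields the clean recursion (with no overcounting factor of $t$)
\[ \pi_{t,\sigma}(x) \le \sum_{\substack{p\le x^{1/2}\\ \sigma_p\equiv\sigma}} \pi_{t-1,\sigma}(x/p). \]
Applying the induction hypothesis, and using that $p\le x^{1/2}$ forces $\log(x/p)\ge\tfrac12\log x$ and $\log\log(x/p)\le\log\log x$, I can pull the slowly-varying factors out of the sum to obtain
\[ \pi_{t,\sigma}(x) \ll \frac{x(\log\log x)^{t-2}}{2^{(t-1)m}\log x}\sum_{\substack{p\le x^{1/2}\\ \sigma_p\equiv\sigma}}\frac{1}{p}, \]
and then finish by invoking Lemma~\ref{lemma:sum1p}, which bounds the remaining sum by $O(2^{-m}\log\log x)$; this supplies exactly the missing factor $2^{-m}\log\log x$ and closes the induction.

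The hard part is controlling the sum near the top of the range. A naive recursion that sums over \emph{all} prime divisors of $n$ introduces a denominator $\log(x/p)$ that blows up as $p$ approaches $x$; isolating the smallest prime factor, which necessarily lies below $x^{1/2}$, simultaneously removes the combinatorial factor of $t$ and keeps $\log(x/p)$ comparable to $\log x$ throughout the sum. I would also note that the implied constant grows with $t$ (each step multiplies by an absolute constant), which is harmless since $t\le 6$ in our application, and that $q$ must remain a small enough fixed power of $x$ for Lemma~\ref{dirichlet} to apply to every argument $x/p\ge x^{1/2}$ arising in the recursion, which holds by the choice of $m$. Finally, the restriction to squarefree $n$ is only for transparency: prime powers $p^{a}$ with $a\ge 2$ contribute the additional terms $\sum_{a\ge2}\pi_{t-1,\sigma}(x/p^{a})$, whose total is of lower order because $\sum_p p^{-a}$ converges for $a\ge 2$ and supplies no extra $\log\log x$ factor.
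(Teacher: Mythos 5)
Your argument is correct and rests on the same skeleton as the paper's proof --- induction on $t$, with Lemma~\ref{dirichlet} for the base case and Lemma~\ref{lemma:sum1p} to close the inductive step --- but the decomposition is genuinely different: you peel off the \emph{smallest} prime factor, while the paper sums over the \emph{largest}. Your choice buys a real simplification. Since the smallest prime factor satisfies $p\le x^{1/2}$, you get $\log(x/p)\ge\tfrac12\log x$ uniformly, so the slowly varying factors pull out of the sum and Lemma~\ref{lemma:sum1p} finishes the job in one line. The paper's choice leaves the sum $\sum_{p}(\log\log(x/p))^{t-2}/(p\log(x/p))$ over $p$ ranging all the way up to $x$, where $\log(x/p)$ degenerates; handling the range $p>x^{1/2}$ requires the dyadic/partial-summation argument behind Theorem 437 of Hardy and Wright, which the paper cites rather than carries out, and it also forces one to apply the inductive bound (hence Brun--Titchmarsh) at arguments $x/p$ that may be small compared to $q$. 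So your version is self-contained exactly where the paper defers to a reference; you correctly identified that controlling the top of the range is the crux, and your isolation of the smallest factor (rather than summing over all factors, which would cost a factor of $t$) is the right fix.

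One caveat on your closing remark: the claim that the non-squarefree terms are of lower order is not right once $2^m$ is large. The $a\ge 2$ contribution is $\ll x(\log\log x)^{t-2}/\bigl(2^{(t-1)m}\log x\bigr)$, since $\sum_{\sigma_p\equiv\sigma}p^{-2}$ need not be small; compared with the target $x(\log\log x)^{t-1}/(2^{tm}\log x)$ this is \emph{larger} by a factor of about $2^m/\log\log x$. This is immaterial here because the paper restricts to squarefree integers throughout (as does its own recursion, implicitly), but the parenthetical as stated should be dropped rather than asserted.
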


\begin{proof}
Proof is by induction on $t$.  $t=1$ follows from Lemma \ref{dirichlet}.

For the general case, we sum over the largest prime divisor $p$ of $n$,
\begin{eqnarray*}
\pi_{t,\sigma}(x)  
      &=& \sum_{ p\le x, \sigma_p=\sigma} \pi_{t-1,\sigma}(x/p) \\
      &\ll& \sum_{ p\le x, \sigma_p\equiv\sigma}
             \frac{x(\log\log (x/p))^{t-2}}{p 2^{(t-1)m}\log (x/p)} \\
      &\ll& \frac{x(\log\log x)^{t-1}}{2^{tm}\log x}.
\end{eqnarray*}
We used Lemma \ref{lemma:sum1p} for the last step. 
(See also the proof of Theorem 437 in \cite[\S22.18]{HW}.)
\end{proof}

Let $\lambda_{p,m}$ denote the value of $\lambda_p$  
  for the prime $p$  using a vector $\nu$ with the first $m$ prime bases. 
We need the following lemma from \cite[Cor. 2.4]{Pappalardi96}:

\begin{lemma} \label{lemma:pap}
There exists a constant $\tau>0$ such that
\[ \sum_{p\le x} \frac{1}{\lambda_{p,m}} \ll \frac{ x^{1/(m+1)}}{(\log x)^{1+\tau/(m+1)}}. \]
\end{lemma}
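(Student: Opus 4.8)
The plan is to reduce the sum to a statement about the \emph{index} $i_p := (p-1)/\lambda_{p,m}$ of the subgroup $\Gamma_p=\langle a_1,\dots,a_m\rangle\le\mathbb{F}_p^\times$, using the identity $1/\lambda_{p,m}=i_p/(p-1)$. The bases are multiplicatively independent (distinct primes, up to the power of $2$ hidden in $a_1=8$), so $\Gamma=\langle a_1,\dots,a_m\rangle$ has rank $m$, and for a divisor $d$ one has $d\mid i_p$ exactly when every $a_j$ is a $d$-th power residue modulo $p$, i.e. when $p$ splits completely in the Kummer field $K_d=\mathbb{Q}(\zeta_d,a_1^{1/d},\dots,a_m^{1/d})$, whose degree is $\asymp_m d^{m+1}$ away from finitely many degenerate $d$. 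First I would record the counting input
\[ \#\{p\le x : d\mid i_p\}\ \ll\ \frac{x}{d^{m+1}\log x}, \]
valid uniformly for $d$ up to roughly $x^{1/(m+1)-\epsilon}$, obtained from an effective large-sieve / Brun--Titchmarsh upper bound for primes splitting completely in $K_d$; only an upper bound is needed, which is what lets us avoid Siegel-zero complications.

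Next I would expand $i_p=\sum_{d\mid i_p}\phi(d)$ and split at the threshold $D=x^{1/(m+1)}$, writing
\[ \sum_{p\le x}\frac{1}{\lambda_{p,m}} \;=\; \sum_{d}\phi(d)\!\!\sum_{\substack{p\le x\\ d\mid i_p}}\frac{1}{p-1}. \]
For $d\le D$ I would insert the counting estimate and use partial summation, exactly as in Lemma~\ref{lemma:sum1p}, to bound the inner sum by $\ll \log\log x/(\phi(d)\,d^m)$, so that this range contributes $\ll \log\log x\sum_{d\le D}d^{-m}$, which is a convergent series for $m\ge 2$ and hence negligible against the claimed bound. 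The genuine work is the complementary range $d>D$, equivalently the primes with $\lambda_{p,m}<x/D=x^{m/(m+1)}$, i.e. those of large index. Here the density heuristic is out of unconditional reach, so I would count these primes directly: the primes with $\lambda_{p,m}=\ell$ all divide $N_\ell=\gcd_i(a_i^\ell-1)\le a_1^{\ell}$ and satisfy $p\equiv 1\pmod{\ell}$, so their product is $\le a_1^{\ell}$ and there are $\ll \ell/\log\ell$ of them. Feeding this into the tail, combined with the Kummer bound at the boundary $d\approx D$ and optimizing $D$, is what controls the dominant $x^{1/(m+1)}$ contribution to the upper bound; the extra saving in the $K_d$-degree is what refines $\log x$ to $(\log x)^{1+\tau/(m+1)}$.

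The main obstacle is precisely the counting estimate above, made uniform in $d$ and unconditional: one must control primes splitting completely across the whole family of Kummer fields $K_d$ whose degrees grow like $d^{m+1}$, without assuming GRH, over a range of $d$ wide enough to reach $x^{1/(m+1)}$. This forces one to (i) quantify the failure of maximality of $[K_d:\mathbb{Q}]$, namely the entanglement of the radicals $a_j^{1/d}$ with $\mathbb{Q}(\zeta_d)$ and with one another, and the nuisance that $a_1$ is a prime power, all while keeping the dependence on $m$ explicit; and (ii) balance the honest density regime against the large-index tail, which is where the precise exponent $\tau$ and the balance point $D=x^{1/(m+1)}$ originate, and where matching the exact power of $\log x$ is delicate. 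Since the statement is Corollary~2.4 of \cite{Pappalardi96}, in the paper we simply cite it; the sketch above is how I would reconstruct the argument.
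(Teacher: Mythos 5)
The paper gives no proof of this lemma at all---it is quoted verbatim from Pappalardi \cite[Cor.~2.4]{Pappalardi96}---and you correctly end by citing that source, so your proposal matches the paper's treatment of the statement. Your reconstruction sketch has the right skeleton (write $1/\lambda_{p,m}=i_p/(p-1)$, decompose over divisors $d$ of the index, use Kummer fields of degree $\asymp d^{m+1}$ for small $d$ and an elementary small-order count for the tail), but note that the uniform unconditional bound $\#\{p\le x: d\mid i_p\}\ll x/(d^{m+1}\log x)$ for $d$ all the way up to $x^{1/(m+1)-\epsilon}$ is stronger than what is known without GRH; Pappalardi's unconditional argument instead leans on the elementary estimate for primes with $|\Gamma_p|$ small to cover most of that range, using field-theoretic input only for much smaller $d$ to extract the extra factor $(\log x)^{\tau/(m+1)}$.
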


Using this, we obtain the following.

\begin{theorem} \label{thm:lambda}
Let $0<\theta<1$.  Then
\[ \sum_{x^\theta<p\le x} \frac{1}{p\lambda_{p,m}} \ll \frac{1}{ x^{\theta - 1/(m+1)} \log x }. \]
\end{theorem}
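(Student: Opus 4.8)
The plan is to read this off directly from Pappalardi's estimate, Lemma \ref{lemma:pap}, using nothing deeper than the fact that every prime in the range of summation is at least $x^\theta$. The factor $1/p$ in the summand is, up to the fixed quantity $x^{-\theta}$, constant across the whole range, so I would simply pull it out and reduce to the sum $\sum 1/\lambda_{p,m}$ that Lemma \ref{lemma:pap} already controls. No cancellation, Brun--Titchmarsh input, or partial summation is really needed; all of the analytic content has been outsourced to the cited corollary.

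Concretely, the first step is the uniform bound $1/p < x^{-\theta}$, valid for all $p$ with $x^\theta < p \le x$, which gives
\[ \sum_{x^\theta < p \le x} \frac{1}{p\lambda_{p,m}} \le x^{-\theta} \sum_{x^\theta < p \le x} \frac{1}{\lambda_{p,m}}. \]
The second step is to discard the lower cutoff — each term is positive, so extending to all $p\le x$ only enlarges the sum — and apply Lemma \ref{lemma:pap}:
\[ \sum_{x^\theta < p \le x} \frac{1}{\lambda_{p,m}} \le \sum_{p\le x} \frac{1}{\lambda_{p,m}} \ll \frac{x^{1/(m+1)}}{(\log x)^{1+\tau/(m+1)}}. \]
Multiplying the two displays yields
\[ \sum_{x^\theta < p \le x} \frac{1}{p\lambda_{p,m}} \ll \frac{x^{1/(m+1)-\theta}}{(\log x)^{1+\tau/(m+1)}}, \]
and the last, purely cosmetic step is to note that since $\tau>0$ and $m>0$ the exponent $1+\tau/(m+1)$ exceeds $1$, so for $x\ge e$ one may replace $(\log x)^{1+\tau/(m+1)}$ by the smaller $\log x$ in the denominator, producing exactly $1/(x^{\theta-1/(m+1)}\log x)$ as claimed.

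I do not expect a genuine obstacle here: the statement is a strictly weaker repackaging of Lemma \ref{lemma:pap}, and the only thing to check is the elementary log-power inequality in the final step (which throws away a real saving of $(\log x)^{\tau/(m+1)}$). If one wanted the \emph{sharp} behavior — namely that the sum is dominated by its lower endpoint $p\approx x^\theta$, giving the smaller exponent $\theta(1/(m+1)-1)$ rather than $1/(m+1)-\theta$ — then I would instead run Abel summation against $S(t)=\sum_{p\le t}1/\lambda_{p,m}$ and verify that $\int_{x^\theta}^x S(t)/t^2\,dt$ is controlled by its lower limit, using that the resulting exponent $1/(m+1)-2$ is strictly below $-1$. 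But since the theorem only asks for the weaker bound, the crude factoring above is enough and I would present that.
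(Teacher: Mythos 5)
Your proposal is correct and is essentially identical to the paper's own proof: both pull out the uniform bound $1/p \le x^{-\theta}$, extend the sum to all $p\le x$, invoke Lemma \ref{lemma:pap}, and use $\tau>0$ to absorb the extra logarithmic saving. Your write-up just spells out the final log-power comparison more explicitly than the paper does.
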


\begin{proof}
  We have
\begin{eqnarray*}
  \sum_{x^\theta<p\le x} \frac{1}{p\lambda_{p,m}}
    &\le& \frac{1}{x^{\theta}} \sum_{x^\theta<p\le x} \frac{1}{\lambda_{p,m}}  \\
    &\ll&   \frac{1}{x^{\theta}} \frac{ x^{1/(m+1)}}{(\log x)^{1+\tau/(m+1)}} .
\end{eqnarray*}
Note that $\tau>0$.
\end{proof}

\subsection{A Conjecture}

In order to prove our $B^{2/3+o(1)}$ running time bound for $t>2$,
  we make use of the conjecture below.
Let ${\mathcal{K}}={\mathcal{K}}(t,\sigma)$ be the set of integers
  with exactly $t$ distinct prime divisors each of which has signature
  matching $\sigma$.
In other words, $\pi_{t,\sigma}(x)$ counts integers in $\mathcal{K}$
  bounded by $x$.

\begin{conjecture} \label{conjecture}
  Let $B,\sigma,t,m,\nu$ be as above.
  Then
  $$
    \sum_{k\le x, k\in{\mathcal{K}}(t,\sigma)}
       \frac{1}{\lambda_{k,m}} \ll x^{o(1)}.
  $$
\end{conjecture}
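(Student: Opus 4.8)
The plan is to prove Conjecture~\ref{conjecture} by induction on $t$, reducing the sum over $\lambda_{k,m}=\operatorname{lcm}\{\lambda_{p,m}:p\mid k\}$ to an approximate product of single-prime sums of the shape already controlled by Lemma~\ref{lemma:pap}. Write $S_t(x)=\sum_{k\le x,\,k\in\mathcal{K}(t,\sigma)}1/\lambda_{k,m}$. For the base case $t=1$ I would use Lemma~\ref{lemma:pap}, which gives $S_1(x)\le\sum_{p\le x}1/\lambda_{p,m}\ll x^{1/(m+1)}/(\log x)^{1+\tau/(m+1)}$; since $m$ is taken proportional to $\log B/\log\log B$, we have $x^{1/(m+1)}=x^{o(1)}$ as $m\to\infty$, so $S_1(x)\ll x^{o(1)}$, as required.

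For the inductive step I would factor each $k\in\mathcal{K}(t,\sigma)$ as $k=np$ with $p$ its largest prime divisor, so that $n\in\mathcal{K}(t-1,\sigma)$ with all prime factors below $p$. Writing $g(n,p)=\gcd(\lambda_{n,m},\lambda_{p,m})$, we have the exact identity $\lambda_{k,m}=\lambda_{n,m}\lambda_{p,m}/g(n,p)$, hence
\[
   S_t(x)=\sum_{p\le x,\,\sigma_p\equiv\sigma}\frac{1}{\lambda_{p,m}}
     \sum_{\substack{n\le x/p\\ n\in\mathcal{K}(t-1,\sigma)}}\frac{g(n,p)}{\lambda_{n,m}}.
\]
If one could ignore the common factor, i.e.\ replace $g(n,p)$ by $O(x^{o(1)})$, then the inner sum would be at most $x^{o(1)}S_{t-1}(x/p)$, and combining the inductive hypothesis $S_{t-1}(x/p)\ll x^{o(1)}$ with $\sum_{p\le x}1/\lambda_{p,m}\ll x^{o(1)}$ from the base case would yield $S_t(x)\ll x^{o(1)}$. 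Theorem~\ref{thm:lambda} and Lemma~\ref{lemma:sum1p} can be used to dispatch the ranges where $p$ is large relative to $x$, in which the crude counting of Lemma~\ref{jslemma1} would be wasteful.

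The hard part---and the reason this is stated as a conjecture rather than a theorem---is controlling the common factor $g(n,p)$ on average. Under the $r$-rank Artin heuristic we expect $\lambda_{p,m}=p-1$ for almost all $p$, so $\lambda_{k,m}=\operatorname{lcm}\{p_i-1\}$ and the defect $\prod_i(p_i-1)/\lambda_{k,m}$ is essentially a product of pairwise gcds $\gcd(p_i-1,p_j-1)$, which are typically $x^{o(1)}$; but this is not something we can establish unconditionally for the weighted sum above. The natural attack is to expand $g(n,p)=\sum_{d\mid g(n,p)}\phi(d)$ and interchange the order of summation, reducing to bounding, for each candidate common divisor $d$, the number of primes $p\le x$ with $d\mid\lambda_{p,m}$ and matching signature, together with the analogous count for $n$. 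The condition $d\mid\lambda_{p,m}$ pins $p$ into a thin union of residue classes, so Lemma~\ref{dirichlet} (Brun--Titchmarsh) bounds each individual count; the difficulty is that summing these bounds over all $d$ with enough uniformity---in $d$, and in the number of prime factors $t$---demands control on the multiplicative structure of $\lambda_{p,m}$ that lies beyond what Lemma~\ref{lemma:pap} provides. Making this final summation converge to $x^{o(1)}$ is precisely the obstacle, and is exactly what the conjecture asserts can be done.
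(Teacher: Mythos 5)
You are being asked to prove a statement that the paper itself labels a \emph{conjecture} and never proves: it is invoked only as a hypothesis, in Theorem \ref{thm:lambda2} and in the conditional $B^{2/3+o(1)}$ bound for $t>2$, with the authors retreating to Lemma \ref{lemma:sum1p} and the crude inequality $1/\lambda_{rp}\le 1/\lambda_p$ when they want an unconditional (and weaker) exponent. Your proposal is therefore correctly not a proof, and you say as much. The parts you do carry out are sound: the case $t=1$ follows unconditionally from Lemma \ref{lemma:pap} once $m\rightarrow\infty$ with $B$ so that $x^{1/(m+1)}=x^{o(1)}$ (modulo the usual caveat that the implied constant must be uniform in $m$, an issue the paper also glosses over); the identity $\lambda_{k,m}=\lambda_{n,m}\lambda_{p,m}/\gcd(\lambda_{n,m},\lambda_{p,m})$ for $k=np$ is exact; and $g=\sum_{d\mid g}\phi(d)$ correctly reduces the inductive step to counting primes $p$ with $d\mid\lambda_{p,m}$ (hence $p\equiv 1\pmod d$) and matching signature.

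The genuine gap is the one you name in your last paragraph, and it is worth being precise about why it is fatal to this line of attack rather than merely inconvenient. After interchanging summation, Brun--Titchmarsh (Lemma \ref{dirichlet}) gives roughly $x/(\phi(d)\log(x/d))$ primes per modulus $d$; multiplying by the weight $\phi(d)$ and summing over all $d$ that could divide $\lambda_{p,m}$ loses the entire factor $\phi(d)$ and, absent further input, reproduces something on the order of $\sum_{p\le x}(p-1)$, which is a power of $x$. The only possible saving is that $d$ must \emph{simultaneously} divide $\lambda_{n,m}$ for the specific cofactor $n$, i.e., one must show that large common divisors of $\lambda_{n,m}$ and $\lambda_{p,m}$ are rare on average; this requires control of the multiplicative structure of $p-1$ and of the index $(p-1)/\lambda_{p,m}$ that Lemma \ref{lemma:pap} does not supply and that is not currently known. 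So your road map is reasonable and your diagnosis of the obstruction is accurate, but no proof is given here, and there is no proof in the paper to compare it against.
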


\begin{theorem} \label{thm:lambda2}
  Assuming Conjecture \ref{conjecture} is true, we have
  $$
    \sum_{x^\theta \le k\le x, k\in\mathcal{K}(t,\sigma)}
       \frac{1}{k\lambda_{k,m}} \ll x^{-\theta+o(1)}.
  $$
\end{theorem}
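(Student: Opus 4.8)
The plan is to mirror the short proof of Theorem~\ref{thm:lambda}, replacing the role of the primes $p$ by the integers $k\in\mathcal{K}(t,\sigma)$ and the role of Lemma~\ref{lemma:pap} by Conjecture~\ref{conjecture}. The key observation is that the factor $1/k$ is uniformly small across the entire range of summation: since every $k$ appearing in the sum satisfies $k\ge x^\theta$, we have $1/k\le x^{-\theta}$, and this single bound can simply be pulled outside the sum.

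First I would write
$$
  \sum_{x^\theta \le k\le x,\, k\in\mathcal{K}(t,\sigma)} \frac{1}{k\lambda_{k,m}}
    \le \frac{1}{x^\theta}\sum_{x^\theta \le k\le x,\, k\in\mathcal{K}(t,\sigma)} \frac{1}{\lambda_{k,m}}.
$$
Next, since all terms are nonnegative, I would enlarge the range of summation on the right to all $k\le x$ with $k\in\mathcal{K}(t,\sigma)$, which only increases the sum. At that point Conjecture~\ref{conjecture} applies verbatim and gives
$$
  \sum_{x^\theta \le k\le x,\, k\in\mathcal{K}(t,\sigma)} \frac{1}{\lambda_{k,m}}
    \le \sum_{k\le x,\, k\in\mathcal{K}(t,\sigma)} \frac{1}{\lambda_{k,m}}
    \ll x^{o(1)}.
$$
Combining the two displays yields the bound $x^{-\theta}\cdot x^{o(1)} = x^{-\theta+o(1)}$, which is exactly the claimed estimate.

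Because the argument amounts to a crude extraction of the smallest possible value of $1/k$ over the summation range followed by a direct appeal to the conjecture, there is no genuine analytic obstacle here; not even partial summation is required (in contrast with the passage from Lemma~\ref{dirichlet} to Lemma~\ref{lemma:sum1p}). The entire content of the statement is housed in Conjecture~\ref{conjecture}, so the only point demanding care is to confirm that the conjecture is stated with the identical parameters $t,\sigma,m,\nu$ and the same set $\mathcal{K}(t,\sigma)$, so that it may be invoked without modification. In this sense the hardest part does not lie in the present proof at all, but in justifying the conjecture itself; the theorem merely transfers that hypothesis into the weighted form $1/(k\lambda_{k,m})$ that the running-time analysis requires downstream.
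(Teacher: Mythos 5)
Your argument is exactly the paper's: the authors prove Theorem~\ref{thm:lambda2} by remarking that it is the same as the proof of Theorem~\ref{thm:lambda} with Conjecture~\ref{conjecture} substituted for Lemma~\ref{lemma:pap}, i.e.\ pull out the uniform bound $1/k\le x^{-\theta}$ and apply the conjectured estimate to the remaining sum. Your write-up just spells out the details the paper leaves implicit; it is correct.
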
 
\begin{proof}
  The proof is essentially the same as that of Theorem \ref{thm:lambda}.
  Simply substitute Conjecture \ref{conjecture} for Lemma \ref{lemma:pap}.
\end{proof}

\subsection{Proof of Theorem \ref{thm:runtime}}

We now have the pieces we need to prove our running time bound.

As above, each pseudoprime candidate we construct will have the form  $n=kp_t$, where $k$ is the product of $t-1$ distinct primes  with matching signature. Again, $\nu=(2,3,5,\ldots)$ is our base vector of small primes  of length $m$.

\subsubsection{$t=2$}  In this case $k$ is prime.

\textbf{GCD Computation.} For each prime $k=p\le X$ we perform a GCD computation as  described in \S\ref{sec:gcd}.  The bottleneck of this computation is computing GCDs of  $O(p)$-bit integers. This gives a running time proportional to
\begin{eqnarray*}
  \sum_{p\le X} M(p)\log p &\ll& \pi(X) X(\log X)^2 \log\log X \\
            &\ll& X^2 \log X \log\log X.
\end{eqnarray*}

\textbf{Sieving Computation.} For each prime $k=p$ with $X<p<B/p$, we sieve the interval $[p+1,B/p]$  for primes that are $\equiv 1 \bmod \lambda_p$. We also employ signature sieving, but for the simplicity of analysis,  we omit that for now. Using the methods from \cite{sorensonanother}, we can sieve an  arithmetic progression of length $\ell$ in $O(\ell\log B)$  operations. We do not need proof of primality here, so a fast  probabilistic test works fine. This gives a running time proportional to
\[  \sum_{X< p\le B/p} \frac{ B\log B }{p\lambda_p} \ll B\log B \sum_{X< p\le B/X} \frac{ 1 }{p\lambda_p} .\]
At this point we know $X^2\le B$ and, to keep the GCD and sieving  computations balanced, $X\ge B^{1/4}$, say. This means Theorem \ref{thm:lambda} applies; we set  $x^\theta =X$ and $x=B/X$ to obtain
\begin{eqnarray*}
  B\log B \sum_{X< p\le B/X} \frac{ 1 }{p\lambda_p}  &\ll&    B\log B \frac{ (B/X)^{1/(m+1)} }{ X \log (B/X) } \\
    &\ll&     \frac{ B }{X} (B/X)^{1/(m+1)}   \\
    &=& (B/X) ^ {1+o(1)}
\end{eqnarray*}
assuming that $m\rightarrow \infty$ with $B$.

Minimizing $ X^{2+o(1)} + (B/X)^{1+o(1)}$ implies $X=B^{1/3}$  and gives the desired running time of  $B^{2/3+o(1)}$.  This completes our proof.
\hfill $\qed$.

\subsubsection{$t>2$}  In this case $k$ is composite.

\textbf{GCD Computation.}
  For $t>2$ we construct integers $k=rp$ for computing GCDs,
  with $r$ consisting of exactly $t-2$ prime divisors less than $p$,
  with signatures matching $\sigma_p$.
For each prime $p$, we perform a hash table lookup and fetch the list of such primes;
  this is Step 5 of Algorithm 3.
This overhead cost is $O(m)+O(\pi_{\sigma_p}(p))=O(m+ 2^{-m}p/\log p)$. 
Summing this over all primes $p\le X$ gives  $O( 2^{-m} X^2/\log X)$. 

Next, for each prime $p\le X$ we construct at most $\pi_{t-2,\sigma_p}(X/p)$ values for $r$
  (Step 7 of Algorithm 3),
  and using Lemma \ref{jslemma1} and multiplying by the cost of computing the GCD, 
  this gives us
\begin{equation}
  \sum_{p\le X} \pi_{t-2,\sigma_p}(X/p) 
     \cdot M(X)\log X
   \ll  X^2  \left(\frac{\log\log X }{2^{m}}\right)^{t-2} \log X  \label{t2gcd}
\end{equation}
for the total running time.

\textbf{Sieving Computation.}
Again, the main loop enumerates all choices for the second-largest prime $p=p_{t-1}$. 
First, we construct all possible $k$-values with $k>X$, $k<B/p$, and $p\mid k$,  
  so $k/p=r=p_1\cdots p_{t-2}$ with all the $p_i<p_{i+1}$ and $p_{t-2}<p$. 
We also have $p_{t-2}<B^{1/3}$. 
This implies $p>X^{1/(t-1)}$.

For a given $p$, fetching the list of primes below $u:=\min\{ p,B/p^2\}$
  with matching signatures takes $O(m+u/(2^m\log u))$ time
  (Algorithm 3 Step 5).
Summing over all $p\le \sqrt{B}$, splitting the sum at $p=B^{1/3}$,
  this takes $2^{-m} B^{2/3+o(1)}$ time.

As above, we write $r=k/p$.
We claim that the total number of $k$ values is $2^{-m(t-2)} B^{2/3+o(1)}$.
Let $u$ be as above.
There are at most $\pi_{t-2,\sigma_p}(u)$ values of $r$ for each $p$.
Again, splitting this at $B^{1/3}$, we have
\begin{eqnarray*}
  \sum_{X^{1/(t-1)}<p\le B^{1/3}} \pi_{t-2,\sigma_p}( p ) 
  &\ll&  \sum_{X^{1/(t-1)}<p\le B^{1/3}}
      \frac{ p (\log\log p)^{t-3} }{ 2^{(t-2)m}\log p} \\
  &\ll& 2^{-m(t-2)} B^{2/3+o(1)}.
\end{eqnarray*}
and
\begin{eqnarray*}
  \sum_{B^{1/3}<p\le \sqrt{B}} \pi_{t-2,\sigma_p}( B/p^2 ) 
  &\ll&  \sum_{B^{1/3}<p\le \sqrt{B}}
     \frac{ B (\log\log B)^{t-3} }{p^2 2^{(t-2)m}\log B} \\
  &\ll&    \frac{1}{ B^{1/3} \log B}
     \frac{ B (\log\log B)^{t-3} }{2^{(t-2)m}\log B} \\
  &\ll& 2^{-m(t-2)} B^{2/3+o(1)}.
\end{eqnarray*}
This covers work done at Step 7 of Algorithm 3.

Finally, the cost of sieving is at most
\begin{eqnarray*}
  \sum_{X^{1/(t-1)}<p\le \sqrt{B}} \sum_{r} \frac{ B\log B }{ rp\lambda_{rp} } .
\end{eqnarray*}
If we use Theorem \ref{thm:lambda2}, based on our conjecture,
  and using the lower bound $X\le k=rp$, this leads to the bound
\begin{equation}
  \left( \frac{ \log\log B }{2^m} \right)^{t-2}  \frac{B^{1+1/(m+1)+o(1)}}{X}. 
  \label{t2sieveA}
\end{equation}
Without the conjecture,
  we use Lemma \ref{lemma:sum1p} together with the argument outlined
  in Hardy and Wright \cite{HW} in deriving (22.18.8),  we obtain that
\[  \sum_{r} \frac{1}{r} \sim \left( \frac{ \log\log B }{2^m} \right)^{t-2}. \]
We then use
$$
  \frac{1}{\lambda_{rp}} \le \frac{1}{\lambda_p}.
$$
As above, this leads to the bound
\begin{equation}
  \left( \frac{ \log\log B }{2^m} \right)^{t-2}  \frac{B^{1+1/(m+1)}}{X^{1/(t-1)}}. 
  \label{t2sieveB}
\end{equation}

To balance the cost of GCDs with sieving, we want to balance
  (\ref{t2gcd}) with (\ref{t2sieveA}) or (\ref{t2sieveB}),
  depending on whether we wish to assume our conjecture or not.
Simplifying a bit, this means balancing
  $X^{2+o(1)}$ with either $(B/X)^{1+o(1)}$ or $B^{1+o(1)}/X^{1/(t-1)}$.
The optimal cutoff point is then
  either $X=B^{1/3}$ under the assumption of Conjecture \ref{conjecture},
  or $X=B^{ ({t-1})/({2t-1}) }$ unconditionally,
  for a total time of
\[ \left( \frac{\log\log B }{2^{m}} \right)^{t-2} 
      B^{2/3+o(1)} \]
with our conjecture, or
\[ \left( \frac{\log\log B }{2^{m}} \right)^{t-2}
     B^{1- \frac{1}{2t-1} +o(1)} \]
without.
We have proven the following.

\begin{theorem}
  Assuming Conjecture \ref{conjecture},
  our algorithm takes $B^{2/3+o(1)}$ time to find all integers $n\le B$
  that are strong pseudoprimes to the first $m$ prime bases,
  if $m\rightarrow\infty$ with $B$.
\end{theorem}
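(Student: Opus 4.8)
The plan is to assemble the theorem from the two case analyses already carried out for $t=2$ and $t>2$, summing the per-$t$ bounds over all admissible numbers of prime factors. Recall that every candidate has the form $n=kp_t$, where $k$ is a product of $t-1$ distinct primes of matching signature, and that the work is partitioned both by $t$ and by the cutoff $X$ separating GCD computations ($k\le X$) from sieving ($k>X$), with the optimal choice $X=B^{1/3}$ under Conjecture \ref{conjecture}. The preceding two subsections established, for each fixed $t$, a running-time bound: the $t=2$ analysis (Theorem \ref{thm:runtime}) gives $B^{2/3+o(1)}$ unconditionally, and for each fixed $t>2$, under Conjecture \ref{conjecture}, the cost is at most $\left(\frac{\log\log B}{2^m}\right)^{t-2} B^{2/3+o(1)}$. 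What remains is to sum these contributions over $t$ and to account for the preprocessing not yet charged.

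First I would bound the range of $t$. Since $n\le B$ is a product of $t$ distinct odd primes, each at least $3$, we have $t\le\log_3 B$; in particular the number of distinct values of $t$ is $O(\log B)=B^{o(1)}$. Next I would control the factor $\left(\frac{\log\log B}{2^m}\right)^{t-2}$. Because $m\to\infty$ with $B$, and indeed $m$ is chosen proportional to $\log B/\log\log B$, we have $2^m\to\infty$ faster than $\log\log B$, so $\frac{\log\log B}{2^m}\le 1$ for all sufficiently large $B$. Raising a quantity in $[0,1]$ to the nonnegative power $t-2$ keeps it in $[0,1]$, so each term with $t>2$ is at most $B^{2/3+o(1)}$. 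Combining the $t=2$ term with the $O(\log B)$ terms for $t>2$, each bounded by $B^{2/3+o(1)}$, gives a total of $(\log B)\cdot B^{2/3+o(1)}=B^{2/3+o(1)}$, since the logarithmic factor is absorbed into the $o(1)$ exponent.

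Then I would verify that the auxiliary costs stay within budget. Restricting to square-free $n$ reduces to finding the Wieferich primes $\le B^{1/2}$ and running the associated tests, which is done in well under $B^{2/3}$ time as cited from Bleichenbacher's thesis. Building the hash-table data structure requires sieving the primes up to roughly $\sqrt{B}$ together with the factorizations of $p-1$ needed to read off each $\lambda_p$; this fits within the sieving budget, and the structure occupies $O(2^m\log B+mB^{1/3})$ bits, which is $B^{2/3+o(1)}$. Finally, each surviving candidate $n=kp_t$ is subjected to a strong pseudoprime test costing $(\log B)^{O(1)}$ operations, and the number of such candidates is dominated by the counts already bounded via Lemma \ref{jslemma1}; hence this stage is also absorbed into $B^{2/3+o(1)}$.

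\textbf{Main obstacle.} The genuine difficulty of the whole argument lies in the sieving bound for $t>2$, where Conjecture \ref{conjecture} (through Theorem \ref{thm:lambda2}) is precisely what is needed to tame $\sum_{k} 1/(k\lambda_{k,m})$; that work is already completed in the preceding subsection. For the final assembly the only delicate points are (i) confirming that the extra factor $\left(\frac{\log\log B}{2^m}\right)^{t-2}$ never exceeds $1$, which hinges on the growth rate of $m$, and (ii) checking that the number of admissible values of $t$ is sub-polynomial in $B$ so that the outer summation is swallowed by the $o(1)$. Both become routine once the choice $m\sim\log B/\log\log B$ is fixed.
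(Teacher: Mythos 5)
Your proposal is correct and follows essentially the same route as the paper: the paper's proof of this theorem \emph{is} the preceding per-$t$ analysis, and the final assembly you spell out (the prefactor $\left(\frac{\log\log B}{2^m}\right)^{t-2}\le 1$ once $m\sim\log B/\log\log B$, plus a sub-polynomial number of admissible $t$) is exactly what the paper leaves implicit. The only cosmetic difference is that the paper bounds the range of $t$ via Lemma \ref{jslemma1} (showing there is no work at all once $t>c\log\log B$), whereas you use the cruder bound $t\le\log_3 B$; both suffice since the count of $t$-values is absorbed into the $o(1)$.
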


If we choose $m$ so that $q$ is a fractional power of $B$,
  then Lemma \ref{jslemma1} implies that there is a constant $c>0$
  such that if $t>c\log\log B$, then there is no work to do.
This explains why our computation did not need to go past $t=6$.
It also explains why, in practice, the $t=2$ case is the bottleneck
  of the computation,
  and is consistent with the implications of our conjecture
  that the work for each value of $t$ decreases as $t$ increases.

\section{Implementation Details}

In this final section, we conclude with some details from our algorithm implementation.

\subsection{Strong Pseudoprimes Found}

In addition to our main results for $\psi_{12}$ and $\psi_{13}$,
  we discovered the following strong pseudoprimes.
This first table contains pseudoprimes found while
  searching for $\psi_{12}$
\begin{center}
\begin{tabular}{| r | l |}
\hline
$n$  & Factorization \\ \hline
3825123056546413051& 747451$\cdot$ 149491 $\cdot$34233211 \\ \hline
230245660726188031& 787711$\cdot$ 214831$\cdot$ 1360591 \\ \hline
360681321802296925566181& 424665351661$\cdot$849330703321 \\ \hline
164280218643672633986221 & 286600958341$\cdot$573201916681 \\ \hline
 318665857834031151167461& 399165290221$\cdot$798330580441 \\ \hline
7395010240794120709381& 60807114061$\cdot$121614228121 \\ \hline
164280218643672633986221 & 286600958341$\cdot$573201916681 \\ \hline
\end{tabular}
\end{center}
This second table contains pseudoprimes found while verifying $\psi_{13}$.  
\begin{center}
\begin{tabular}{| r | l |}
\hline
$n$  & Factorization \\ \hline
318665857834031151167461 & 399165290221$\cdot$798330580441 \\ \hline
2995741773170734841812261& 1223875355821$\cdot$2447750711641 \\ \hline
667636712015520329618581& 577770158461$\cdot$1155540316921 \\ \hline
3317044064679887385961981& 1287836182261$\cdot$2575672364521 \\ \hline
3317044064679887385961981&1247050339261$\cdot$2494100678521 \\ \hline
552727880697763694556181& 525703281661$\cdot$1051406563321 \\ \hline
360681321802296925566181& 424665351661$\cdot$849330703321 \\ \hline
7395010240794120709381& 60807114061$\cdot$121614228121 \\ \hline
3404730287403079539471001& 1304747157001$\cdot$2609494314001 \\ \hline
164280218643672633986221& 286600958341$\cdot$573201916681 \\ \hline
\end{tabular}
\end{center}

\subsection{Hash Table Datastructure}

It is natural to wonder how much time is needed to manage the hash table datastructure
  from \S\ref{sec:datastructure}.
Specifically, we measured the time to create the table when inserting all primes up to
  a set bound.
For this measurement, we used a vector $\nu$ with the first 8 prime bases.
We also measured how long it took, for each prime up to the bound listed, to also perform
  the fetch operation, which returns a list of primes with matching signature with $\lambda$
  values.
\begin{center}
\begin{tabular}{| c | c | c | c |}
\hline
Largest Prime  & Table Entries & Creation Time & Fetching Time \\ \hline
$10^6$ & 78490 & 1.13  & 2.58  \\ \hline
$10^7$ & 664571 &11.49 & 2:44.49 \\ \hline
$10^8$ & 5761447 &   1:56.22  & 3:30:50.2 \\ \hline 
\end{tabular}
\end{center}
The times are given in seconds, or minutes:seconds, or hours:minutes:seconds as appropriate.

We felt the data structure was a success, in that it was fast and small enough that
  it was not noticed in the larger computation.

\subsection{Almost-Linear GCDs}

\begin{figure}
\centering
\includegraphics{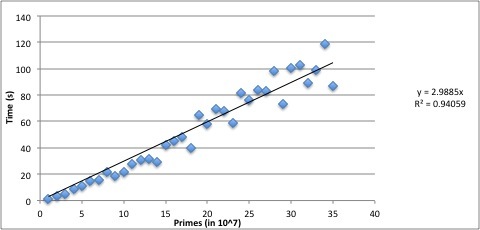}
\caption{GCD timings}
\label{gcd-timing}
\end{figure}

In Figure \ref{gcd-timing},
  we present the times to perform Algorithm 1 on selected prime values for $k$ 
  (so this is for the $t=2$ case).
The data points were obtained by averaging the time
  needed to carry out Algorithm 1 for the first ten primes 
  exceeding $n\cdot10^7$ for $1 \leq n \leq 35$.   
Averaging was required because the size of the $h$ values is not uniformly increasing in $k$.  
This explains the variance in the data; for example, the first ten primes after $35\cdot 10^7$ 
  happened to have smaller $h$ values on average than the first ten primes after $34\cdot10^7$.  

It should be clear from the data that our GMP algorithm for computing GCDs
  was using an essentially linear-time algorithm.  
Note that if we chose to extend the computation to large enough $k$ values, memory would become a
  concern and we would expect to see paging/thrashing degrade performance.

\subsection{GCD/Sieving Crossover}
\begin{figure}
\centering
\includegraphics[width=4in]{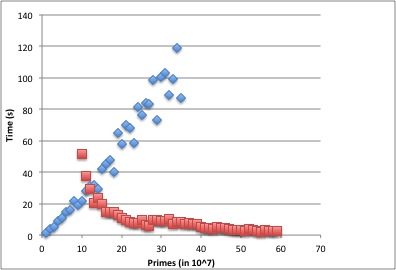}
\caption{Signature Sieving and GCD timing}
\label{crossover}
\end{figure}

In Figure \ref{crossover}, we present a comparison of the timings for computing GCDs
  (Algorithm 1, diamonds on the graph) with signature sieving (Algorithm 2, squares on the graph).
For this graph, we are looking at choosing the crossover point $X$,
  and again, we are focusing on the $t=2$ case.
\begin{itemize}
\item
  One would expect that the signature sieving curve should behave as an inverse quadratic;
    the time to sieve for one prime $k=p$ is proportional to
    $$ \frac{B\log B}{p\lambda_p}$$
    and we expect $\lambda_p\approx p-1$ most of the time.
  However, utilizing signatures obscures this to some degree, since the algorithm cannot
    make use of all prime bases if they divide $\lambda$, hence the minor variations in the curve.
  Let us elaborate this here.

The data points for signature sieving in Figure \ref{crossover} represent the average sieve time for the first 50 primes past $n\cdot10^7$ for $10 \leq n \leq 60$.  There is a lot of variance in these timing because of the inclusion and exclusion of primes from the signature depending if they are relatively prime to $\lambda$.   For example, $p = 174594421$ has $\lambda = 2^2\cdot3^3\cdot5\cdot11\cdot13\cdot17\cdot19$ and therefore has few primes from the base set to use in signature sieving.  The inability to add extra primes shows up in the timing data;  a careful inspection of the data shows a strange jump when primes transition from $28\cdot10^7$ to $29\cdot10^7$.  The data points to a steady decrease, then an almost two fold increase in time followed by a steady decrease again.  We believe this is because, on average, one less base prime may be used in signature sieving.  Using one less base prime results in approximately twice as much work.  

\item
  In the computation for $\psi_{12}$, our actual crossover point $X$ was $30\cdot10^7$,
  even though the timing data would suggest the optimal crossover point is around $12\cdot10^7$.
  From a cost-efficiency point of view, we chose poorly.  
  However, the choice was made with two considerations.  
  One is that the program to compute GCDs (Algorithm 1) is simple to write,
    so that program was up and running quickly, and we let it run
    while we wrote the sieving code (Algorithm 2).
  Second is that, initially, we did not know which $\psi_m$ value we would ultimately be able 
    to compute.  
  Since the results from  GCD computations apply to all larger values of $m$,
    we opted to err in favor of computing more GCDs.  
\end{itemize}

\subsection{Signature Sieving}

\begin{figure}
\centering
\includegraphics[width=4in]{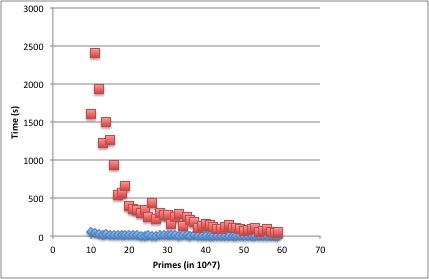}
\caption{Signature Sieving and $\lambda$-Sieving}
\label{sievecomp}
\end{figure}

Figure \ref{sievecomp} shows the impact that signature sieving makes.  
Here the squares in the graph give $\lambda$-sieving times with no signature information used,
  and the diamonds show the same $\lambda$-sieving work done while taking advantage of
  signature information using the space-saving wheel.
Since there is relatively little variance involved in $\lambda$-sieving, each point represents the first prime after $n\cdot10^7$ for $10 \leq n \leq 60$.   On the same interval signature sieving practically looks like it takes constant time compared to  $\lambda$-sieving.  If we had opted to not incorporate signature sieving in, then the expected crossover point given these timings would occur when the primes are around $42\cdot10^7$.  


\bibliographystyle{amsplain}

\begin{thebibliography}{10}

\bibitem{Bach1990}
Eric Bach, \emph{Explicit bounds for primality testing and related problems},
  Math. Comp. \textbf{55} (1990), no.~191, 355--380. \MR{1023756 (91m:11096)}

\bibitem{bleich}
Daniel Bleichenbacher, \emph{Efficiency and security of cryptosystems based on
  number theory}, Ph.D. thesis, Swiss Federal Institute of Technology Zurich,
  1996.

\bibitem{FandD}
Fran{\c{c}}ois~G. Dorais and Dominic Klyve, \emph{A {W}ieferich prime search up
  to {$6.7\times 10^{15}$}}, J. Integer Seq. \textbf{14} (2011), no.~9, Article
  11.9.2, 14. \MR{2859986}

\bibitem{sorensonagain}
Brian Dunten, Julie Jones, and Jonathan Sorenson, \emph{A space-efficient fast
  prime number sieve}, Inform. Process. Lett. \textbf{59} (1996), no.~2,
  79--84. \MR{1409956 (97g:11141)}

\bibitem{HW}
G.~H. Hardy and E.~M. Wright, \emph{An introduction to the theory of numbers},
  5th ed., Oxford University Press, 1979.

\bibitem{Iwaniec80}
Henryk. Iwaniec, \emph{On the {Brun-Titchmarsh} theorem}, J. Math. Soc. Japan
  \textbf{34} (1982), no.~1, 95--123.

\bibitem{jaeschke}
Gerhard Jaeschke, \emph{On strong pseudoprimes to several bases}, Math. Comp.
  \textbf{61} (1993), no.~204, 915--926. \MR{1192971 (94d:11004)}

\bibitem{8bases}
Yupeng Jiang and Yingpu Deng, \emph{Strong pseudoprimes to the first eight
  prime bases}, Math. Comp. (2014), 1--10 (electronic).

\bibitem{LZ2007}
A.~Languasco and A.~Zaccagnini, \emph{A note on {M}ertens' formula for
  arithmetic progressions}, Journal of Number Theory \textbf{127} (2007),
  no.~1, 37 -- 46.

\bibitem{Pappalardi96}
Francesco Pappalardi, \emph{On the order of finitely generated subgroups of
  q*(mod p) and divisors of {p−1}}, Journal of Number Theory \textbf{57}
  (1996), no.~2, 207 -- 222.

\bibitem{r_rank}
Francesco Pappalardi, \emph{On the {$r$}-rank {A}rtin conjecture}, Math. Comp.
  \textbf{66} (1997), no.~218, 853--868. \MR{1377664 (97f:11082)}

\bibitem{pom_self}
Carl Pomerance, J.~L. Selfridge, and Samuel~S. Wagstaff, Jr., \emph{The
  pseudoprimes to {$25\cdot 10^{9}$}}, Math. Comp. \textbf{35} (1980), no.~151,
  1003--1026. \MR{572872 (82g:10030)}

\bibitem{rabin}
Michael~O. Rabin, \emph{Probabilistic algorithm for testing primality}, J.
  Number Theory \textbf{12} (1980), no.~1, 128--138. \MR{566880 (81f:10003)}

\bibitem{SS71}
A.~Sch{\"o}nhage and V.~Strassen, \emph{Schnelle {M}ultiplikation gro\ss{}er
  {Z}ahlen}, Computing (Arch. Elektron. Rechnen) \textbf{7} (1971), 281--292,
  MR 45 \#1431. \MR{45 \#1431}

\bibitem{sorensonanother}
Jonathan~P. Sorenson, \emph{The pseudosquares prime sieve}, Algorithmic number
  theory, Lecture Notes in Comput. Sci., vol. 4076, Springer, Berlin, 2006,
  pp.~193--207. \MR{2282925 (2007m:11168)}

\bibitem{Sorenson10a}
Jonathan~P. Sorenson, \emph{Sieving for pseudosquares and pseudocubes in
  parallel using doubly-focused enumeration and wheel datastructures},
  Proceedings of the 9th International Symposium on Algorithmic Number Theory
  (ANTS-IX) (Nancy, France) (Guillaume Hanrot, Francois Morain, and Emmanuel
  Thom\'e, eds.), Springer, July 2010, LNCS 6197, ISBN 978-3-642-14517-9,
  pp.~331--339.

\bibitem{SZ2004}
Damien Stehl\'e and Paul Zimmermann, \emph{A binary recursive {GCD} algorithm},
  Sixth International Algorithmic Number Theory Symposium (Burlington, Vermont,
  USA) (Duncan Buell, ed.), Springer, June 2004, LNCS 3076, pp.~411--425.
  \MR{MR2138011 (2006e:11194)}

\bibitem{zhang}
Zhenxiang Zhang, \emph{Two kinds of strong pseudoprimes up to {$10^{36}$}},
  Math. Comp. \textbf{76} (2007), no.~260, 2095--2107 (electronic). \MR{2336285
  (2008h:11114)}

\end{thebibliography}
\providecommand{\bysame}{\leavevmode\hbox to3em{\hrulefill}\thinspace}
\providecommand{\MR}{\relax\ifhmode\unskip\space\fi MR }
\providecommand{\MRhref}[2]{%
  \href{http://www.ams.org/mathscinet-getitem?mr=#1}{#2}
}
\providecommand{\href}[2]{#2}

\end{document}